\documentclass[12pt,amssymb]{amsart}
\usepackage{amssymb}
\usepackage{amsmath}
\usepackage{enumerate}
\usepackage[mathscr]{eucal}

\newcommand{\im}{{\rm im}\:}

\newtheorem{thm}{Theorem}[section]
\newtheorem{lemma}[thm]{Lemma}
\newtheorem{prop}[thm]{Proposition}
\newtheorem{cor}[thm]{Corollary}
\theoremstyle{definition}
\newtheorem{remark}[thm]{Remark}

\begin{document}

\title[Congruence subgroup property]{The congruence subgroup property for $S$-arithmetic subgroups of simple algebraic groups when $S$ has positive Dirichlet density}

\author[A.S.~Rapinchuk]{Andrei S. Rapinchuk}

\begin{abstract}
Let $G$ be an absolutely almost simple simply connected algebraic group defined over a number field $K$, and let $M/K$ be the minimal Galois extension over which $G$ becomes 
an inner form of a split group. Assume that $G$ satisfies the Margulis-Platonov conjecture over $K$. We prove that if $S$ is a set of valuations of $K$ that contains all archimedean 
ones but does not contain any nonarchimedean valuations $v$ for which $G$ is anisotropic over the completion $K_v$ such that its intersection $S \cap \mathrm{Spl}(M/K)$ with the set $\mathrm{Spl}(M/K)$  of nonarchimedean valuations of $K$ that split completely in $M$ has positive Dirichlet density, then the congruence kernel $C^S(G)$ is trivial. This result provides additional evidence for 
Serre's Congruence Subgroup Conjecture \cite{Serre}. The proof does not involve any case-by-case considerations and relies on previous results concerning the congruence kernel \cite{PR-CSP} and recent results 
on almost strong approximation \cite{CW}, \cite{RT}. 
\end{abstract}

\address{Department of Mathematics, University of Virginia,
Charlottesville, VA 22904-4137, USA}

\email{asr3x@virginia.edu}

\maketitle

\section{Main Theorem}

Let $G$ be an absolutely almost simple simply connected algebraic group defined over a~number field $K$, let $S$ be a subset of the set $V^K$ of all (inequivalent) valuations of $K$ containing the set 
$V^K_{\infty}$ of archimedean valuations, and let $C^S(G)$ be the corresponding congruence kernel (cf. \cite{CSP-Sur1}, \cite{CSP-Sur2}).  Then the {\it Congruence Subgroup Problem} (CSP) amounts to computing $C^S(G)$ for given $G$ and $S$; recall that $C^S(G)$ is trivial if and only if 
the classical congruence subgroup property holds for the group $G(\mathcal{O}(S))$ of points of $G$ over the ring $\mathcal{O}(S)$ of $S$-integers in $K$ associated with any faithful $K$-defined representation $G \hookrightarrow \mathrm{GL}_n$ (see \S\ref{S:Centr}). J.-P.~Serre \cite{Serre} conjectured that for finite $S$ the congruence kernel  $C^S(G)$ should be finite is 
$$
\mathrm{rk}_S \: G \, := \, \sum_{v \in S} \mathrm{rk}_S \: G \, \geq \, 2 \ \ \text{(and} \ \ \mathrm{rk}_{K_v}\: G > 0 \ \ \text{for all} \ \ v \in S \setminus V^K_{\infty}),   
$$ 
where $\mathrm{rk}_P\: G$ denoted the rank of $G$ over a field extension $P$ of $K$. We note that if $C^S(G)$ is finite and the Margulis-Platonov conjecture (MP) concerning the normal subgroup structure of $G(K)$ (see below) holds,  the metaplectic kernel computations (see \cite{PR-Met}) 
provide a precise description of $C^S(G)$.

While Serre's conjecture has been confirmed in a number of cases (see \cite{CSP-Sur1}, \cite{CSP-Sur2} for a survey of available results), it remains completely open for some anisotropic groups that require noncommutative division algebras in their description, including even anisotropic groups of type $\textsf{A}_1$, i.e. norm 1 groups in quaternion division algebras. However, one can generate additional evidence for Serre's conjecture by investigating it for {\it infinite} $S$. More precisely, it follows from Serre's conjecture couple with our computations of the metaplectic kernel \cite{PR-Met} that $C^S(G)$ should be trivial for any $G$ as above and any infinite $S$ such that $\mathrm{rk}_{K_v}\: G > 0$ for $v \in S \setminus V^K_{\infty}$, and this is what one would like to prove. In \cite{PR-Met}, this was established for $S$ {\it cofinite} (i.e., when $V^K \setminus S$ is finite) assuming the truth of (MP). In \cite{PR-CSP}, the result was extended to sets $S$ of the form a generalized arithmetic progression minus a finite set. M.M.~Radhika and M.S.~Raghunathan \cite{RadRag} showed that for anisotropic inner forms of type $\textsf{A}_n$ (i.e., for groups of the form $\mathrm{SL}_{1 , D}$ where $D$ is a central division algebra over $K$) the results remains valid if $S$ is a generalized arithmetic progression minus any subset of Dirichlet density zero. Recently in \cite{RT}, using results involving a new property of {\it almost strong approximation}, the result of \cite{RadRag} was extended to all absolutely almost simple simply connected algebraic groups. Further progress on ASA by Y.~Cao and Y.~Wang \cite{CW} enables one to prove the following ultimate result in this direction. 

In order to give a precise statement, we first need to recall the statement of the Margulis-Platonov Conjecture: 

\vskip3mm 

\noindent (MP) \parbox[t]{15cm}{Let $\mathcal{A} = \{ v \in V^K_f := V^K \setminus V^K_{\infty} \ \vert \ \mathrm{rk}_{K_v}\: G = 0 \}$, set $\displaystyle 
G_{\mathcal{A}} := \prod_{v \in \mathcal{A}} G(K_v)$ and consider the diagonal map $\delta \colon G(K) \to G_{\mathcal{A}}$.  Then for every noncentral normal subgroup $N$ of $G(K)$ 
there should exist an open normal subgroup $W$ of $G_{\mathcal{A}}$ such that $N = \delta^{-1}(W)$. In particular, if $\mathcal{A} = \varnothing$ (which is automatically the case 
if $G$ is not of type $\textsf{A}_n$) then $G(K)$ does not have any proper noncentral normal subgroups.}

\vskip3mm 

\noindent For results on (MP) obtained prior to 1990 -- see \cite[Ch. IX]{PlRa}. Subsequently, (MP) was proved also for all anisotropic inner forms of type $\textsf{A}_n$ -- see \cite{R-MP}, \cite{Segev}.

Next, for a finite extension $L/K$ we let $\mathrm{Spl}(L/K)$ denote the set of $v \in V^K_f$ that split completely in $L$ (cf. \cite[p. 49]{Neu}). Also, for a set of places $T \subset V^K_f$, we let $\mathfrak{d}_K(T)$ denote the Dirichlet density of $T$ (i.e., the Dirichlet density of the corresponding set of primes), cf. \cite[Ch. VII, \S13]{Neu} and \S\ref{S:Dens} below. With these notations, we have the following theorem which is the main result of this paper.  
\begin{thm}\label{T:Main} 
Let $G$ be an absolutely almost simple simply connected algebraic group over a number field $K$, and let $M$ be the minimal Galois extension of $K$ over which $G$ becomes an inner form of the split group. 
Assume that (MP) holds for $G$ over $K$. If $S \subset V^K$  contains $V^K_{\infty}$, does not contain any $v \in V^K_f$ with $\mathrm{rk}_{K_v}\: G = 0$ and satisfies  $\mathfrak{d}_K(S \cap \mathrm{Spl}(M/K)) > 0$ then $C^S(G) = \{ 1 \}$. 
\end{thm}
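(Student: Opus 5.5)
The proof goes through the centrality criterion of \cite{PR-CSP}. Under (MP), for $S$ containing no anisotropic nonarchimedean places, $C^S(G)$ is trivial as soon as two things hold. First, $C^S(G)$ is central in the $S$-arithmetic completion $\widehat{G}^S$. Second, the metaplectic kernel $M(S,G)$ is trivial. For infinite $S$ the second condition holds by \cite{PR-Met}: the metaplectic kernel vanishes once $S$ contains enough nonarchimedean places, and here $S$ is infinite because $S \cap \mathrm{Spl}(M/K)$ has positive density. So the task reduces to proving centrality, and the plan is to check the sufficient condition for centrality from \cite{PR-CSP}.

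As I recall it, that condition reads as follows. The congruence kernel is central provided that for a suitable finite set $V \subset V^K_f \setminus S$ and every open normal subgroup $U$ of $G(\mathbb{A}^S)$ (equivalently, every $S$-congruence subgroup $\Gamma$), a certain subgroup generated by $S$-arithmetic points is large in the $S$-adic topology. Concretely, the closure of $\Gamma$ together with unipotent or ``$V$-adic'' elements has finite index, or contains $\prod_{v \notin S \cup V} G(\mathcal{O}_v)$ up to finite index. In \cite{PR-CSP} this was arranged through strong approximation for tori. For $S$ a generalized arithmetic progression, this was done by producing maximal $K$-tori $T$ for which $T(\mathcal{O}(S))$ is dense in the adelic points outside $S$. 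The density is secured by Chebotarev applied to the splitting field of $T$.

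The role of almost strong approximation is to replace exact density by density up to finite index. The result of Cao--Wang \cite{CW}, as sharpened in \cite{RT}, says the following. Let $T$ be a $K$-torus split by an extension $P$, and let $S$ be a set with $\mathfrak{d}_K(S \cap \mathrm{Spl}(P/K))>0$. Then $T$ has almost strong approximation for $S$, meaning the closure of $T(\mathcal{O}(S))$ in $T(\mathbb{A}^S)$ has finite index in a natural open subgroup. The key step is to choose generic maximal tori $T \subset G$ defined over $K$ whose splitting field $P$ satisfies $\mathfrak{d}_K(S \cap \mathrm{Spl}(P/K))>0$. For this I would take $T$ with splitting field $P \supset M$ and Galois group of $P/M$ equal to the Weyl group. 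Then $\mathrm{Spl}(P/K) \subset \mathrm{Spl}(M/K)$, and it meets $S \cap \mathrm{Spl}(M/K)$ in only a fraction $1/|W|$ in general, which could have zero density. So instead I would prescribe local conditions: at finitely many places choose $T$ so that $P$ is linearly disjoint from auxiliary fields. Then Chebotarev density, applied to the Galois closure, shows that a positive proportion of $S \cap \mathrm{Spl}(M/K)$ also splits in $P$. The dangerous case is that $S \cap \mathrm{Spl}(M/K)$ concentrates on places that are inert in $P$. To rule it out, I would vary $T$ among infinitely many tori with pairwise linearly disjoint splitting fields over $M$. By a pigeonhole/density argument, at least one of them must capture positive density of $S\cap \mathrm{Spl}(M/K)$. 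This choice of tori is the main obstacle I expect.

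Once such tori are available, ASA gives that $\overline{T(\mathcal{O}(S))}$ has finite index in $T(\mathbb{A}^S)$ (intersected with an open subgroup). Hence the closure of $\Gamma \cap T(K)$ contains an open subgroup of $T(\mathcal{O}_v)$ for almost all $v \notin S$. Using finitely many such generic tori that generate $G$ locally, the groups they generate in $G(\mathcal{O}_v)$ give the required finite-index property. The finite-index defect coming from ASA is harmless because the centrality criterion only needs finite index. This yields centrality of $C^S(G)$. Combined with (MP) and the triviality of the metaplectic kernel, we conclude $C^S(G)=\{1\}$.
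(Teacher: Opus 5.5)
\textbf{There is a genuine gap in the centrality step.} Your reduction is right: centrality of $C^S(G)$, together with (MP) and the vanishing of $M(S,G)$ for infinite $S$, gives $C^S(G)=\{1\}$. Your pigeonhole idea is also right: among several independent generic tori, one must capture a positive proportion of $S\cap\mathrm{Spl}(M/K)$, and this is Proposition~\ref{P:Dens1}. The problem is the centrality criterion you ``recall''. As you state it, it involves only congruence subgroups and closures in $G(\mathbb{A}_S)$, where strong approximation for $G$ already gives everything. So it cannot detect $C^S(G)=\ker(\widehat{G}^S\to\overline{G}^S)$, and your last step (finitely many tori with ASA ``generate $G(\mathcal{O}_v)$ locally'') says nothing about the congruence kernel.

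What is actually needed (Theorem~\ref{T:0001}, Corollary~\ref{C:0001}) is a single $n$ with $\pi(Z_{\widehat{G}^S}(x))\supset\Gamma^n$ for every $x\in C^S(G)$. Equivalently, $Z(N,y)(\overline{N}\cap\Gamma)\supset\Gamma^n$ for all finite-index normal $N\subset\Gamma$ and all $y\in\overline{N}\cap\Gamma$. For this the tori cannot be fixed in advance; they must be tied to $y$.
\begin{enumerate}
\item Given $z\in\Gamma$, replace it by $z_k=zn_k$ with $n_k\in N$, chosen to be regular in prescribed tori at auxiliary places.
\item Choose $a_k\in N$ so that $t_k=ya_k$ is $N_v$-conjugate into $Z_G(z_k)^\circ$ at finitely many places.
\item Take $T_k=Z_G(t_k)^\circ$. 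Only then does $T_k(K)\subset Z_{\widehat{G}^S}(t_k)$ hold, so ASA gives $\pi(Z_{\widehat{G}^S}(t_k))\supset\overline{T_k(K)}\supset T_k(\mathbb{A}_S)^n$.
\item This inclusion produces an element of $Z(N,y)$ in $z_k^n\overline{N}$.
\end{enumerate}
The genericity and independence of the $T_k$ (Proposition~\ref{P:Gen}) are forced at the $cr$ auxiliary places. These places must satisfy three conditions:
\begin{itemize}
\item they lie outside $S$, so that $N$ is dense in $N_v$;
\item they lie in $\mathrm{Spl}(M/K)$, so that $G$ splits there and Theorem~\ref{T:Gen} applies;
\item they avoid a finite set $V$ that depends on $N$.
\end{itemize}

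Because the tori now vary with $(N,y)$, several things you treat as harmless become essential.
\begin{itemize}
\item \textbf{Uniformity.} ``The finite-index defect is harmless'' is false, since one $n$ must work for infinitely many tori. The paper therefore fixes $r$ in advance with $m^{-1}(1-|W|^{-1})^r<\omega/2$, rather than ranging over infinitely many tori as you suggest. This yields the uniform bound $\delta=\omega/(2r)$ for one of the $r$ tori. It then uses a version of ASA (Theorem~\ref{T:ASA-tori}) in which the index divides $\phi(\delta,d)$, depending only on $\delta$ and $d$.
\item \textbf{Upper density.} The pigeonhole only bounds the \emph{upper} density $\bar{\mathfrak{d}}_K(S\cap\mathrm{Spl}(P_k/K))$ from below. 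So ASA stated with $\mathfrak{d}_K$, as in your proposal, does not apply directly.
\item \textbf{Missing case split.} The construction needs infinitely many places in $\mathrm{Spl}(M/K)\setminus S$. When this set is finite, i.e.\ $S$ almost contains $\mathrm{Spl}(M/K)$, the argument cannot run. The paper treats that case separately using the earlier results of \cite{PR-CSP} and \cite{RT}; your proposal has no such case split.
\end{itemize}
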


(It should be noted  that if $G$ is an {\it inner} form of the split group then $M=k$, hence $\mathrm{Spl}(M/K) = V^K_f$, so our assumption becomes $\mathfrak{d}_K(S \cap V^K_f) > 0$.)  

\vskip1.5mm 

The proof of the theorem, which we give in \ref{S:Proof}, has two major ingredients. First, we use the conditions for the centrality of the congruence kernel developed in \cite{PR-CSP}. We review this material in \ref{S:Centr} and derive a condition for centrality in the form needed for the proof of Theorem~\ref{T:Main}. Another ingredient is  almost strong approximation (ASA) introduced in \cite{RT}. The result on ASA that we need (Theorem \ref{T:ASA-tori}) is a reformulation of the results from \cite{CW}, \cite{RT} in terms of {\it upper} Dirichlet density, which is defined and discussed in \S\ref{S:Dens}. We will also use our previous results on the existence of generic tori -- see \cite{PR-Gen1}, \cite{PR-WC}, \cite{PR-Gen2}. Finally, it should be pointed out that our proof of Theorem \ref{T:Main} in the situation where $M \neq K$ (i.e., when $G$ is an outer form) relies on the previous results from \cite{PR-CSP}, \cite{RT} covering the case where $S$ almost contains a generalized arithmetic progression (cf. Case 1 in \S\ref{S:Proof}).

\section{A condition for centrality of the congruence kernel}\label{S:Centr} 

We begin with a brief review of the fundamental notions associated with the Congruence Subgroup Problem (see \cite{CSP-Sur1}, \cite{CSP-Sur2} for more details). While these can be defined for arbitrary linear algebraic groups, we will focus here on the case of principal interest to us where $G$ is an absolutely almost simple simply connected algebraic group defined over a number field $K$. Let us fix for now a faithful $K$-defined representation $\rho \colon G \hookrightarrow \mathrm{GL}_n$. 

Next, given a subset $S \subset V^K$ containing $V^K_{\infty}$, we let $\mathcal{O}(S)$ denote the ring of $S$-integers in $K$, and consider the group of $\mathcal{O}(S)$-points 
$$
G(\mathcal{O}(S)) := G(K) \cap \rho^{-1}\left( \mathrm{GL}_n(\mathcal{O}(S)) \right). 
$$
Furthermore, for a nonzero ideal $\mathfrak{a} \subset \mathcal{O}(S)$, we define the corresponding congruence subgroup 
$$
G(\mathcal{O}(S) , \mathfrak{a}) := G(\mathcal{O}(S)) \cap \rho^{-1}\left( \mathrm{GL}_n(\mathcal{O}(S) , \mathfrak{a}) \right), 
$$
where $\mathrm{GL}_n(\mathcal{O}(S) , \mathfrak{a})$ is the usual congruence subgroup of $\mathrm{GL}_n(\mathcal{O}(S))$ of level $\mathfrak{a}$. Then for any ideal $\mathfrak{a}$, the congruence subgroup $G(\mathcal{O}(S) , \mathfrak{a})$ is a normal subgroup of $G(\mathcal{O}(S))$ of finite index, and the classical Congruence Subgroup Problem asks if the converse is true, viz. if every finite index normal subgroup of $G(\mathcal{O}(S))$ contains a suitable congruence subgroup. The current analysis of this problem relies on the notion of the ($S$-)congruence kernel, introduced by J.-P.~Serre \cite{Serre}, which we now recall. 
One defines two topologies on the group of $K$-rational points $G(K)$ compatible with the group structure and independent of the choice of $\rho$: 

\vskip1.5mm 

\noindent $\bullet$ \parbox[t]{15cm}{$\tau_a^S$, the $S$-arithmetic topology, having the collection of all finite index (normal) subgroups $N \subset G(\mathcal{O}(S))$ as a fundamental system of neighborhoods of the identity, and}

\vskip2mm 

\noindent $\bullet$ \parbox[t]{15cm}{$\tau_c^S$, the $S$-congruence topology, having the collection of all congruence subgroups $G(\mathcal{O}(S) , \mathfrak{a})$ as a fundamental system of neighborhoods 
of the identity.}

\vskip2mm 

\noindent Then the affirmative answer to the classical CSP for $G(\mathcal{O}(S))$ is equivalent to the fact that $\tau_a^S = \tau_c^S$. To gauge the difference between these two topologies in the general case, one observes that $G(K)$ admits completions in the category of locally compact topological groups with respect to both $\tau_a^S$ and $\tau_c^S$ which are called the $S$-arithmetic completion and the $S$-congruence completion and denoted $\widehat{G}^S$ and $\overline{G}^S$, respectively. Since $\tau_a^S$ is stronger than $\tau_c^S$, there exists a continuous group homomorphism $\pi \colon \widehat{G}^S \to \overline{G}^S$ which turns out to be surjective. Thus, we have the following exact sequence
\begin{equation}\tag{C}\label{E:F1} 
1 \to C^S(G) \longrightarrow \widehat{G}^S \stackrel{\pi}{\longrightarrow} \overline{G}^S \to 1, 
\end{equation}
where $C^S(G) := \ker \pi$ is the ($S$-){\it congruence kernel} of $G$ (note that it is independent of $\rho$). It is easy to see that $\tau_a^S = \tau_c^S$ if and only if $C^S(G) = \{ 1 \}$. Thus, Theorem \ref{T:Main} asserts that in the situation described in its statement we have the affirmative answer to the classical CSP for $G(\mathcal{O}(S))$. If the group $G(\mathcal{O}(S))$ is finite then trivially $C^S(G) = \{ 1 \}$ (in fact, this situation never occurs in the case of infinite $S$ which is our main focus in this note). Otherwise it follows from the Strong Approximation Theorem (cf. \cite[Ch. VII]{PlRa}) that  the completion $\overline{G}^S$ can be naturally identified with the group of $S$-adeles $G(\mathbb{A}_S)$ where $\mathbb{A}_S = \mathbb{A}_{K , S}$ is the ring of $S$-adeles of $K$, and we will routinely use this identification.  

A standard strategy for proving that $C^S(G) = \{ 1 \}$ is based on proving first that $C^S(G)$ is {\it central} (i.e. (\ref{E:F1}) is a central extension). Once this is accomplished, one shows, assuming that (MP) holds for $G(K)$,  that $C^S(G)$ is isomorphic to the (dual of the) {\it metaplectic kernel} $M(S , G)$. It follows from the computations of $M(S , G)$ in \cite{PR-Met} that for any infinite $S$, the metaplectic kernel $M(S , G)$ is trivial. Thus, in order to prove Theorem \ref{T:Main} it is enough to show that in the situation described in it the congruence kernel $C^S(G)$ is central. For this we will use the following statement (which is a variant of Theorem 3.1(ii) in \cite{PR-CSP}) where for the ease of notations we let $\Gamma$ denote $G(\mathcal{O}(S))$.  
\begin{thm}\label{T:0001}
Assume that $G$ satisfies (MP) over $K$ and that a subset $S \subset V^K$ contains $V^K_{\infty}$ and does not contain any $v \in V^K_f$ with $\mathrm{rk}_{K_v}\: G = 0$. If there exists $n \geq 1$ such that for every $x \in C^S(G)$ we have 
\begin{equation}\label{E:Cond1}
\pi(Z_{\widehat{G}^S}(x)) \supset \Gamma^n, 
\end{equation}
where $Z_{\widehat{G}^S}(x)$ denotes the centralizer of $x$ in $\widehat{G}^S$. Then (\ref{E:F1}) is a central extension. 
\end{thm}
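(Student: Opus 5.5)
Fix $x \in C^S(G)$ and set $H = Z_{\widehat{G}^S}(x)$. The plan is to show that $\pi(H)$ is large enough to force $x$ to be central in $\widehat{G}^S$. I will combine the hypothesis (\ref{E:Cond1}) with (MP) to show that $\pi(H)$ contains an open subgroup of $\overline{G}^S = G(\mathbb{A}_S)$. Then I will use that $C^S(G)$ is profinite and that $\widehat{G}^S$ is topologically generated by $\Gamma$-related subgroups.

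\emph{Step 1.} Let $\widehat{\Gamma}$ be the closure of $\Gamma$ in $\widehat{G}^S$. It is the profinite completion of $\Gamma$, open in $\widehat{G}^S$, and contains $C^S(G)$. Since $C^S(G)$ is compact and $\widehat{\Gamma}$ is profinite, the centralizer of $x$ in $\widehat{\Gamma}$ is closed. Hence $\pi(H \cap \widehat{\Gamma})$ is a closed subgroup of $\overline{\Gamma}$, the closure of $\Gamma$ in $G(\mathbb{A}_S)$. I will check that $\pi(H)\supset\Gamma^n$ gives $\pi(H\cap \widehat\Gamma)\supset \Gamma^n$: elements of $\Gamma^n$ lift into $\widehat\Gamma$, and the lift differs from an element of $H$ by an element of $C^S(G)\subset\widehat\Gamma$. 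Taking closures, $\pi(H\cap\widehat\Gamma)$ contains the closure $\overline{\Delta}$ of the subgroup $\Delta$ generated by $\Gamma^n$.

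\emph{Step 2.} Next I show that $\overline{\Delta}$ is open in $G(\mathbb{A}_S)$. The subgroup $\Delta$ is normal in $\Gamma$ and noncentral, since $\Gamma$ is Zariski dense and not of bounded exponent modulo the center, $\Gamma$ being infinite. Let $N$ be the normal subgroup of $G(K)$ generated by $\Delta$. By (MP), $N=\delta^{-1}(W)$ with $W$ open in $G_{\mathcal A}$. Because $S\cap\mathcal A=\varnothing$, this is compatible with strong approximation and shows that $N$ is dense in the $S$-adelic topology away from $\mathcal A$. The remaining point is to pass from $N$ back to $\Delta$ itself. Here I will invoke the argument of \cite{PR-CSP} (Thm.~3.1): the closure of a noncentral normal subgroup of $\Gamma$ in the congruence topology is open, using (MP) together with the fact that $\Gamma/\Delta$ has finite exponent. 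I expect this step to be the main obstacle, since it is where (MP) and the finiteness-type argument enter. I will treat it by reducing to the statement that $\Gamma/\Delta$ is finite in a suitable sense, or by directly citing the corresponding step of \cite{PR-CSP}.

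\emph{Step 3.} Put $U=\pi(H\cap\widehat\Gamma)$, which is an open subgroup of $G(\mathbb{A}_S)$ by Step 2. Then $H\cap\widehat\Gamma\supset$ an open subgroup of $\widehat\Gamma$ modulo $C^S(G)$, that is, $H\cdot C^S(G)$ is open. The group $G(K)$ acts on $C^S(G)$ by conjugation, through lifting to $\widehat{G}^S$. Every $g\in G(K)$ lies in a product of conjugates of elements of $\Gamma\cap U$, because $G(K)$ normalizes the family and $G(K)$ modulo the center is generated by any noncentral normal subgroup, again by (MP) and $S\cap\mathcal A=\varnothing$. I then argue, as in \cite{PR-CSP}, that $C^S(G)$ commutes with $\widehat{G}^S$. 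Concretely, the action of $\widehat{G}^S$ on $C^S(G)$ factors through $G(\mathbb{A}_S)$, since $C^S(G)$ acts trivially on itself modulo inner automorphisms, using that $C^S(G)$ is abelian-by-finite here. This action has open stabilizers for every $x$ and is invariant under $G(K)$-conjugation. Since $G(\mathbb{A}_S)$ has no proper open normal subgroups of the relevant kind (it is generated by unipotent elements at places of positive rank outside $S$, and the image of the kernel of the action is normal and open), the action is trivial. Hence every $x$ is central, and (\ref{E:F1}) is a central extension.
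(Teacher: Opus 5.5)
There is a genuine gap in Step 3, which is where the content of the theorem lies; Step 1 is fine.

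(i) The conjugation action of $\widehat{G}^S$ on $C^S(G)$ factors through $G(\mathbb{A}_S)=\widehat{G}^S/C^S(G)$ only if $C^S(G)$ is abelian. Nothing available to you makes $C^S(G)$ abelian-by-finite; that is essentially what you are trying to prove. Triviality of an induced outer action would not give centrality anyway.

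(ii) The inclusion $\pi(Z_{\widehat{G}^S}(x))\supset U$ only says that each $u\in U$ has, for each $x$, \emph{some} lift commuting with $x$, and this lift depends on $x$. So even with $U$ independent of $x$, this does not produce an open subgroup of $\widehat{G}^S$ centralizing all of $C^S(G)$; there is no open ``kernel of the action''. Getting lifts of the groups $G(K_v)$ that centralize $C^S(G)$ simultaneously is exactly the nontrivial input of Propositions~\ref{P:0001} and \ref{P:0002} (Props.~2.2 and 2.5 of \cite{PR-CSP}).

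(iii) $G(\mathbb{A}_S)$ does have proper open normal subgroups when $\mathcal{A}\neq\varnothing$. Since $\mathcal{A}\cap S=\varnothing$, the compact factors $G(K_v)$, $v\in\mathcal{A}$, which contain no nontrivial unipotents, sit inside $G(\mathbb{A}_S)$. Handling them is precisely where (MP) enters Proposition~\ref{P:0002}. Your paraphrase of (MP), that $G(K)$ is generated modulo the center by any noncentral normal subgroup, is false in this case.

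Step 2 is also not established. You defer its key point to \cite{PR-CSP}, of which the present theorem is a variant. Moreover, the normal closure of $\Delta$ in $G(K)$ gives no control over $\overline{\Delta}$.

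Step 2 is in any case unnecessary: openness of $\overline{\Delta}$ is never needed. The paper's argument runs as follows.
\begin{itemize}
\item Since $\pi$ is closed, (\ref{E:Cond1}) yields $\pi(Z_{\widehat{G}^S}(x))\supset\overline{\Gamma}^n$ for every $x$.
\item By strong approximation, $\overline{\Gamma}=\prod_{v\notin S}G(\mathcal{O}_v)$.
\item Zariski density of $G(\mathcal{O}_v)$ provides a \emph{single} noncentral $a\in G(\mathcal{O}_v)^n$ lying in $\pi(Z_{\widehat{G}^S}(x))$ for all $x$ at once.
\item Proposition~\ref{P:0001} then puts every $v\in V^K\setminus(\mathcal{A}\cup S)$ into $\mathfrak{Z}$, and Proposition~\ref{P:0002} gives centrality.
\end{itemize}

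One easy piece of the mechanism shows why one element suffices. The set $\bigcap_{x\in C^S(G)}\pi(Z_{\widehat{G}^S}(x))$ is a closed normal subgroup of $G(\mathbb{A}_S)$, because $gZ_{\widehat{G}^S}(x)g^{-1}=Z_{\widehat{G}^S}(gxg^{-1})$ and $C^S(G)$ is normal. Hence for $v\notin S\cup\mathcal{A}$ it contains all of $G(K_v)$ once it contains one noncentral element, since $G(K_v)$ has no proper noncentral normal subgroups. The hard part, passing from $x$-dependent lifts to centrality, is what those two propositions supply using (MP).

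To repair your proof, replace Steps 2--3 by this reduction to Propositions~\ref{P:0001} and \ref{P:0002}. There is no shortcut through an action of $G(\mathbb{A}_S)$ on $C^S(G)$.
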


It should be pointed that at least in spirit our condition (\ref{E:Cond1}) is akin to the considerations used by Serre \cite{Serre} in the context of the CSP for $G = \mathrm{SL}_2$ over the ring $\mathcal{O}(S)$ with infinite group of units $\mathcal{O}(S)^{\times}$, where he first proves that for a unit $u \in \mathcal{O}(S)$ of infinite order and $h(u) = \mathrm{diag}(u , u^{-1})$ there exists $n \geq 1$ such that $h(u)^n$ centralizes the congruence kernel $C^S(G)$, and then derives the centrality of the latter. (In fact, Serre showed that in this case one can take for such an $n$ the number of roots of unity in $K$, however 
in our application of Theorem \ref{T:0001} for proving Theorem \ref{T:Main} the value of $n$ is less explicit -- cf. the beginning of the proof of Proposition \ref{P:Main}).   

Theorem \ref{T:0001} follows from the following two results established in \cite{PR-CSP}. Let $\mathcal{A}$ be as in the statement of (MP) noting that  by our assumption $\mathcal{A} \cap S = \varnothing$. Set 
$$
\mathfrak{Z} = \{ v \in V^K \setminus (\mathcal{A} \cup S) \, \vert \, \pi(Z_{\widehat{G}^S}) \supset G(K_v) \} 
$$ 
(hereinafter, we routinely identify $\overline{G}^S$ with $G(\mathbb{A}_S)$, which enables us to naturally consider $G(K_v)$ for $v \notin S$ as a subgroup of $\overline{G}^S$). 

\begin{prop}\label{P:0001} {\rm (Proposition 2.2 in \cite{PR-CSP})}  
Let $v \in V^K \setminus (\mathcal{A} \cup S)$. Assume that there exists $a \in G(K_v)$ such that $a \in \pi(Z_{\widehat{G}^S}(x))$ for every $x \in C^S(G)$. Then $v \in \mathfrak{Z}$. 
\end{prop}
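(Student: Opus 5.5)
The plan is to package the hypothesis into a single subgroup of $\overline{G}^S = G(\mathbb{A}_S)$ that is simultaneously closed, normal, and contains $a$, and then to invoke the normal subgroup structure of $G(K_v)$ for isotropic $G$. Concretely, set
$$
H := \bigcap_{x \in C^S(G)} \pi\bigl(Z_{\widehat{G}^S}(x)\bigr) \subset G(\mathbb{A}_S).
$$
By hypothesis $a \in H$, and the goal is to show $G(K_v) \subset H$. I read the definition of $\mathfrak{Z}$ as requiring $G(K_v) \subset \pi(Z_{\widehat{G}^S}(x))$ for every $x \in C^S(G)$, so this inclusion is exactly the conclusion $v \in \mathfrak{Z}$.

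\emph{Step 1: $H$ is normal.} Since $C^S(G)$ is normal in $\widehat{G}^S$, for $g \in \widehat{G}^S$ we have $gZ_{\widehat{G}^S}(x)g^{-1} = Z_{\widehat{G}^S}(gxg^{-1})$. Moreover, $x \mapsto gxg^{-1}$ permutes $C^S(G)$. Hence $\pi(g) H \pi(g)^{-1} = H$. Because $\pi$ is surjective, $H$ is normal in $G(\mathbb{A}_S)$.

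\emph{Step 2: $H$ is closed.} Each centralizer $Z_{\widehat{G}^S}(x)$ is a closed subgroup of $\widehat{G}^S$. The kernel $C^S(G)$ of $\pi$ is compact (it is profinite), and $\pi$ is an open continuous surjection of locally compact groups. It follows that $\pi$ is a closed map, since for closed $F$ the set $\pi^{-1}(\pi(F)) = F\cdot C^S(G)$ is closed. Therefore each $\pi(Z_{\widehat{G}^S}(x))$ is closed, and so is their intersection $H$.

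\emph{Step 3: apply the local normal subgroup theorem.} By Steps 1 and 2, $N := H \cap G(K_v)$ is a closed normal subgroup of $G(K_v)$ containing $a$. Since $v \notin \mathcal{A} \cup S$, the group $G$ is $K_v$-isotropic. As $G$ is simply connected and absolutely almost simple, we have $G(K_v) = G(K_v)^+$ (Platonov's solution of the Kneser--Tits problem over local fields). Tits' simplicity theorem then says that every noncentral normal subgroup of $G(K_v)$ is the whole group. Hence $N = G(K_v)$ provided $a$ is noncentral, and this yields $G(K_v) \subset \pi(Z_{\widehat{G}^S}(x))$ for all $x$, i.e. $v \in \mathfrak{Z}$.

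The main obstacle is the centrality issue in Step 3. If $a$ were central in $G(K_v)$, the hypothesis would carry no information, so the statement must be understood with $a$ noncentral; I would make this explicit. This is harmless for the intended application, where $a$ will be a regular semisimple element, e.g. coming from $\Gamma^n$ in the condition of Theorem~\ref{T:0001}. The second delicate point is the topological claim in Step 2, that images under $\pi$ of closed subgroups are closed. I would verify it carefully from the compactness of $C^S(G)$ and the structure of the completions; closedness is really only needed in order to apply the local simplicity result to $H \cap G(K_v)$.
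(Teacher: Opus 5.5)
Your proof is correct and follows essentially the same route as the argument behind the cited Proposition 2.2 of \cite{PR-CSP} (the present paper gives no proof of its own). Normality of $C^S(G)$ makes $\bigcap_{x \in C^S(G)} \pi(Z_{\widehat{G}^S}(x))$ a normal subgroup of $G(\mathbb{A}_S)$, and its intersection with the isotropic group $G(K_v) = G(K_v)^+$ is all of $G(K_v)$ by Tits' theorem as soon as it contains a noncentral element.

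You are right that $a$ must be assumed noncentral; this is exactly how the paper uses the result, via $G(\mathcal{O}_v)^n \not\subset Z(G(K_v))$ in the proof of Theorem~\ref{T:0001}. Your Step 2 is harmless but not actually needed, since Tits' theorem applies to arbitrary normal subgroups, not only closed ones.
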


\begin{prop}\label{P:0002} {\rm (cf. Proposition 2.5 in \cite{PR-CSP})} 
Assume that $G$ satisfies (MP) over $K$ and that $\mathcal{A} \cap S = \varnothing$. If $\mathfrak{Z} = V^K \setminus (\mathcal{A} \cup S)$ then (\ref{E:F1}) is a central extension. 
\end{prop}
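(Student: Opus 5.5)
The plan is to study the closed normal subgroup
$$
\Delta := \{ g \in \widehat{G}^S \ \vert \ gxg^{-1} = x \ \text{for all} \ x \in C^S(G) \},
$$
the kernel of the conjugation action of $\widehat{G}^S$ on the normal subgroup $C^S(G)$, and to show that $\Delta = \widehat{G}^S$. The argument has three steps: (1) pass from the "one $x$ at a time" hypothesis $\mathfrak{Z} = V^K \setminus (\mathcal{A} \cup S)$ to information about $\Delta$ itself; (2) produce a noncentral normal subgroup of $G(K)$ contained in $\Delta$, so that (MP) becomes applicable; (3) use density of $G(K)$ in $\widehat{G}^S$ to upgrade what (MP) gives to all of $\widehat{G}^S$.

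\emph{Step 1.} We use the fact that $C^S(G)$ is profinite, so it has a basis of neighbourhoods of $1$ of the form $C_U := C^S(G) \cap U$, with $U$ an open normal subgroup of $\widehat{G}^S$. Fix such a $U$ and a place $v \in V^K \setminus (\mathcal{A} \cup S)$. For each of the finitely many cosets $xC_U$, the hypothesis $G(K_v) \subset \pi(Z_{\widehat{G}^S}(x))$ says that every element of $G(K_v)$ has a lift to $\widehat{G}^S$ fixing $x$. Hence $\pi^{-1}(G(K_v)) = Z_{\widehat{G}^S}(x) \cdot C^S(G)$. We then pass to the finite group $C^S(G)/C_U$, on which $\widehat{G}^S$ acts by conjugation. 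Since $C^S(G)$ need not be abelian, it does not act trivially on this quotient. What the hypothesis does show is that the image of $\pi^{-1}(G(K_v))$ in $\mathrm{Aut}(C^S(G)/C_U)$ coincides with the image of $C^S(G)$, i.e. lies in the group of inner automorphisms induced by $C^S(G)$. This is the key finiteness input for Step 2.

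\emph{Step 2.} Let $N := G(K) \cap \Delta$. It is a normal subgroup of $G(K)$, because $\Delta$ is normal in $\widehat{G}^S$. By Step 1, for each $U$ the subgroup of $G(K)$ acting on $C^S(G)/C_U$ through inner automorphisms coming from $C^S(G)$ has finite index in $G(K) \cap \pi^{-1}(G(K_v))$-type neighbourhoods. Using that $G(K)$ is dense and that the action factors through a finite group, we aim to find elements of $G(K)$ that are noncentral and act trivially on $C^S(G)/C_U$ for all $U$ simultaneously. The device is to take suitable powers, or a commutator subgroup, of these finite-index subgroups; intersecting over $U$ is controlled by compactness of $C^S(G)$. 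This yields $N \not\subset Z(G)$. Then (MP) gives an open normal subgroup $W \subset G_{\mathcal{A}}$ with $N = \delta^{-1}(W)$.

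\emph{Step 3.} Since $\Delta$ is closed, it contains the closure $\overline{N}$ of $N$ in $\widehat{G}^S$. Because $N = \delta^{-1}(W)$ is open in the topology on $G(K)$ defined by the $\mathcal{A}$-adic places, its image is dense in $\pi^{-1}$ of the open subgroup
$$
W \times \prod_{v \notin S \cup \mathcal{A}} G(K_v) \subset G(\mathbb{A}_S).
$$
Here we use strong approximation, together with the fact that $\mathcal{A} \cap S = \varnothing$ and the groups $G(K_v)$, $v \in \mathcal{A}$, are compact. We then show that $\overline{N} \cdot C^S(G)$ is open in $\widehat{G}^S$ and that $\overline{N} \supset C^S(G)$. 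This is where one re-uses $\mathfrak{Z}$: every $G(K_v)$ with $v \notin \mathcal{A} \cup S$ lies in $\pi(\Delta)$, and these groups generate a dense subgroup of the $S \cup \mathcal{A}$-part. Consequently $\Delta$ is an open normal subgroup containing $G(K) \cap \pi^{-1}(W')$ for an open $W'$. Finally, $G(K)\Delta/\Delta$ is a finite quotient of $G(K)$ whose kernel is of the form $\delta^{-1}(W'')$, and density of $G(K)$ gives $\Delta = \widehat{G}^S$, i.e. (\ref{E:F1}) is central.

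\emph{Main obstacle.} I expect the crux to be Step 2: passing from centralizers of individual $x$ to a nontrivial noncentral subgroup of $G(K)$ centralizing all of $C^S(G)$ at once. A naive intersection over $x$ of the groups $Z_{\widehat{G}^S}(x)$ might be tiny. The first thing I would try is to work modulo each $C_U$, where only finitely many $x$ matter, take the resulting finite-index conditions, and use compactness of $C^S(G)$ to pass to the limit.
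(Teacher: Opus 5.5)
The paper does not prove this statement; it imports it from an earlier paper of Prasad and Rapinchuk, so I am judging your plan on its own. Its overall shape is reasonable: take the centralizer $\Delta$ of $C^S(G)$, then apply (MP) and density. But each step has a genuine gap.

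\emph{Step 1.} The quotients you chose are degenerate. $\widehat{G}^S$ is not profinite, and under (MP) \emph{every} open normal subgroup $U$ of $\widehat{G}^S$ contains $C^S(G)$. To see this, note that $U\cap G(K)$ contains a finite-index subgroup of $\Gamma$, so it is noncentral. Hence (MP) gives $U\cap G(K)=\delta^{-1}(W)$, and by density $U\supset\overline{\delta^{-1}(W)}=\pi^{-1}(W\times G(\mathbb{A}_{S\cup\mathcal{A}}))\supset C^S(G)$. So $C^S(G)/C_U$ is trivial and Step 1 says nothing. You should instead use open normal subgroups $C_1$ of $C^S(G)$ itself. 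These are automatically stable under $\mathcal{G}_v:=\pi^{-1}(G(K_v))$, because the hypothesis says precisely that $\mathcal{G}_v$ acts on $C^S(G)$ by class-preserving automorphisms. Class-preserving automorphisms need not be inner, however, so your stated conclusion does not follow from the hypothesis alone. The missing input is that for $v\notin S\cup\mathcal{A}$ the group $G(K_v)$ is isotropic, hence (Kneser--Tits over local fields plus Tits' simplicity theorem) has no proper subgroups of finite index. Therefore the homomorphism $\mathcal{G}_v\to\mathrm{Out}(C^S(G)/C_1)$, which kills $C^S(G)$, is trivial. A compactness argument over the $C_1$ then gives $G(K_v)\subset\pi(\Delta)$.

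\emph{Steps 2--3.} Step 2 leaves the crux as a hope (``suitable powers, or a commutator subgroup''). Moreover $G(K)\cap\pi^{-1}(G(K_v))=\{1\}$, so the subgroups you invoke there are trivial. You do not need to exhibit elements of $G(K)\cap\Delta$ directly. Since $\pi$ is closed, $\pi(\Delta)$ is a closed normal subgroup containing every $G(K_v)$ with $v\notin S\cup\mathcal{A}$, hence containing $G(\mathbb{A}_{S\cup\mathcal{A}})$. So $\widehat{G}^S/\Delta$ is compact and totally disconnected, i.e.\ profinite, and $\Delta$ is an intersection of open normal subgroups. By the remark above each of these contains $C^S(G)$. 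Hence $C^S(G)$ is abelian and $\Delta\supset H:=\pi^{-1}(G(\mathbb{A}_{S\cup\mathcal{A}}))$.

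The last sentence of Step 3 is false when $\mathcal{A}\neq\varnothing$. If $\Delta$ is open normal with $\Delta\cap G(K)=\delta^{-1}(W'')$, density only gives $\widehat{G}^S/\Delta\cong G_{\mathcal{A}}/W''$, which need not be trivial. You need a separate argument that $E:=\pi^{-1}(G_{\mathcal{A}})$ centralizes $C^S(G)$. Here is one way:
\begin{itemize}
\item Since $C^S(G)$ is central in $H$ and $G_{\mathcal{A}}$ commutes with $G(\mathbb{A}_{S\cup\mathcal{A}})$, $E$ centralizes $L:=\overline{[H,H]}$.
\item $H=L\,C^S(G)$ because $G(\mathbb{A}_{S\cup\mathcal{A}})$ is topologically perfect.
\item Suppose $C^S(G)\not\subset L$. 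Pick a $\widehat{G}^S$-stable open subgroup $C_1\subset C^S(G)$ with $(C^S(G)\cap L)C_1\neq C^S(G)$. Pick an open normal $E'\subset E$ acting trivially on $C^S(G)/C_1$ with $E'\cap C^S(G)\subset C_1$.
\item For $e\in E'$, the map $h\mapsto[e,h]C_1$ is a homomorphism on $H$ vanishing on $L$ and on $C^S(G)$, so $[E',H]\subset C_1$.
\item Then $LE'C_1$ is an open normal subgroup of $\widehat{G}^S$ that does not contain $C^S(G)$. This contradicts the (MP) consequence above.
\end{itemize}
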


\vskip2mm 

{\it Proof of Theorem \ref{T:0001}.} Since $\ker \pi = C^S(G)$ is a profinite, hence compact, group, the map $\pi$ is closed, and therefore (\ref{E:Cond1}) implies that for every $x \in C^S(G)$ we have the inclusion 
$$
\pi\left( Z_{\widehat{G}^S}  \right) \supset \overline{\Gamma}^n, 
$$ 
where $\overline{\Gamma}$ is the closure of $\Gamma$ in $\overline{G}^S = G(\mathbb{A}_S)$. Clearly, $W := \prod_{v \notin S} G(\mathcal{O}_v)$ is an open compact subgroup of $G(\mathbb{A}_S)$, and since $G(K)$ is dense in $G(\mathbb{A}_S)$ by the Strong Approximation Theorem, we obtain that $\Gamma = G(K) \cap W$ is dense in $W$, and therefore $\overline{\Gamma} = W$. For each $v \notin S$, the group $G(\mathcal{O}_v)$ is Zariski-dense in $G$ (cf. \cite[Lemma 3.2]{PlRa}), and therefore $G(\mathcal{O}_v)^n \not\subset Z(G(K_v)$. Applying Proposition \ref{P:0001}, we obtain that $v \in \mathfrak{Z}$, and thus $\mathfrak{Z} = V^K \setminus (\mathcal{A} \cup S)$. Then the centrality of (\ref{E:F1}) follows from Proposition \ref{P:0002}. \hfill $\Box$ 

\vskip1mm 

The following statement is used to verify condition (\ref{E:Cond1}). 
\begin{prop}\label{P:0003}
Let $n \geq 1$. Assume that for every finite index normal subgroup $N$ of $\Gamma$ and any $y \in \overline{N} \cap \Gamma$ we have the inclusion 
\begin{equation}\label{E:Cond2}
Z(N, y) (\overline{N} \cap \Gamma) \supset \Gamma^n, 
\end{equation}
where $Z(N, y) := \{ t \in \Gamma \ \vert \ [y , t] \in N \}$. Then (\ref{E:Cond1}) holds for all $x \in C^S(G)$.   
\end{prop}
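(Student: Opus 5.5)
We work inside the profinite completion $\widehat{\Gamma}$ of $\Gamma$, which is an open subgroup of $\widehat{G}^S$ and contains $C^S(G)$. Its closure in $\overline{G}^S$ is $\overline{\Gamma}$. Here $\overline{N}$ denotes the closure of $N$ in the congruence topology, so $\overline{N}\cap\Gamma$ is the congruence closure of $N$ in $\Gamma$. Recall that $C^S(G)=\bigcap_N \widehat{N}$ over finite index normal subgroups $N\subset\Gamma$, and that $\widehat{N}$ is the closure of $N$ in $\widehat{G}^S$. Moreover $\pi(\widehat{N})=\overline{N}$ and $\pi^{-1}(\overline{N})\cap\widehat\Gamma=\widehat{N}C^S(G)$.

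Fix $x\in C^S(G)$ and $g\in\Gamma^n$. We must find $h\in\widehat{G}^S$ commuting with $x$ and satisfying $\pi(h)=g$. The equivalent goal is to show that the closed set $\pi^{-1}(g)\cap Z_{\widehat{G}^S}(x)$ is nonempty. Since $\pi^{-1}(g)=gC^S(G)$ is compact, it suffices by compactness to prove the following: for every finite index normal $N\subset\Gamma$, there is $c\in C^S(G)$ with $[x,gc]\in\widehat{N}$. Indeed, the sets $\{c\in C^S(G): [x,gc]\in \widehat N\}$ are closed, since $\widehat N$ is open and closed. They are also directed, since $\widehat{N_1\cap N_2}=\widehat N_1\cap\widehat N_2$. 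So nonemptiness of each yields a point in the intersection, and that point commutes with $x$.

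Fix $N$. Because $\widehat N$ is open and $\Gamma$ is dense in $\widehat\Gamma$, we can replace $x$ by an element $y\in\Gamma$ with $y\in x\widehat{N}'$, where $N'\subset N$ is chosen small enough for the commutator condition modulo $\widehat N$ to be insensitive to this change. Concretely, take $N'=N$ and use normality of $\widehat N$: then $[y,t]\in\widehat N$ if and only if $[x,t]\in\widehat N$, for all $t\in\widehat\Gamma$. Since $x\in C^S(G)\subset\widehat{N}C^S(G)$, we get $\pi(y)\in\pi(x\widehat N)=\overline{N}$, so $y\in\overline{N}\cap\Gamma$. Now apply \eqref{E:Cond2} and write $g=ts$ with $t\in Z(N,y)$ and $s\in\overline{N}\cap\Gamma$. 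Then $[y,t]\in N$, hence $[x,t]\in\widehat N$. Also $s\in\pi^{-1}(\overline N)\cap\widehat\Gamma=\widehat N C^S(G)$, so $s=nc'$ with $n\in\widehat N$ and $c'\in C^S(G)$. Since $C^S(G)$ is normal, we have $g\,c'^{-1}n'=t$ for some $n'\in\widehat N$, that is, $t\in g\,c\,\widehat N$ with $c\in C^S(G)$. Because $[x,\cdot]$ modulo $\widehat N$ depends only on the coset modulo the normal subgroup $\widehat N$, it follows that $[x,gc]\in\widehat N$, as required.

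The main obstacle is justifying the structural facts about the completions used above. First, $\pi(\widehat N)=\overline N$. Second, $\pi^{-1}(\overline N)\cap\widehat\Gamma=\widehat N\,C^S(G)$; this holds because $\widehat N$ is open of finite index in $\widehat\Gamma$ and $\pi$ restricted to $\widehat\Gamma$ is a quotient map with kernel $C^S(G)$. Third, the coset-invariance of commutators modulo a normal open subgroup. The compactness step also requires checking that $\pi^{-1}(g)$ lies in $\widehat\Gamma$, which holds because $g\in\Gamma$.
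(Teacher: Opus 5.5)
Your proposal is correct and follows the paper's argument. You approximate $x$ by $y\in\Gamma\cap x\widehat{N}$ (the paper's $x_R\in\Gamma\cap xR$), apply (\ref{E:Cond2}) to get an element of $gC^S(G)$ that commutes with $x$ modulo $\widehat{N}$, and finish by compactness. Your version is slightly cleaner: it notes that $\{t\in\widehat{\Gamma} : [x,t]\in\widehat{N}\}$ is a union of $\widehat{N}$-cosets and then runs a single finite-intersection argument in the compact coset $gC^S(G)$, where the paper uses compactness twice.

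One slip needs fixing. The ``recalled'' identity $C^S(G)=\bigcap_N\widehat{N}$, taken over all finite index normal $N$, is false. That intersection is $\{1\}$ because $\widehat{\Gamma}$ is profinite; $C^S(G)$ is the intersection of the closures of the congruence subgroups only. Moreover, $\bigcap_N\widehat{N}=\{1\}$ is exactly what your final step (``that point commutes with $x$'') requires.
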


(We note that $Z(N, y)$ is simply the pullback of the centralizer of $yN$ in $\Gamma/N$ under the canonical homomorphism $\Gamma \to \Gamma/N$.)

\begin{proof}
(cf. \cite[proof of Proposition 3.2]{PR-CSP}) Fix $x \in C^S(G)$ and $\gamma \in \Gamma$. Let $\mathcal{R}$ be the family of all open normal subgroups of the profinite completion $\widehat{\Gamma}$, which is naturally identified with the closure of $\Gamma$ in $\widehat{G}^S$. For $R \in \mathcal{R}$, we set 
$$
N_R = \Gamma \cap R \ \ \text{and} \ \ \tilde{R} = \pi^{-1}(\overline{N_R}) = R \cdot C^S(G), 
$$
and pick an arbitrary $x_R \in \Gamma \cap (xR)$. Applying (\ref{E:Cond2}) to $N = N_R$ and $y = x_R$, we obtain that 
$$
\gamma^n \in Z(N_R , x_R) (\overline{N_R} \cap \Gamma) \subset \tilde{Z}(R, x) \tilde{R}, 
$$
where 
$$
\tilde{Z}(R , x) := \{ t \in \widehat{\Gamma} \ \vert \ [x , t] \in R \} = \{ t \in \widehat{\Gamma} \ \vert \ [x_R , t] \in R \}, 
$$
hence
\begin{equation}\label{E:Cond3} 
\gamma^n \tilde{R} \cap \tilde{Z}(R, x) \neq \varnothing \ \ \text{for any} \ \ R \in \mathcal{R}. 
\end{equation}
Using the compactness argument, one easily derives from (\ref{E:Cond3}) that 
\begin{equation}\label{E:Cond4}
\gamma^n C^S(G) \cap Z_{\widehat{\Gamma}}(x) \neq \varnothing, 
\end{equation}
which implies that $\gamma^n \in \pi(Z_{\widehat{\Gamma}}(x))$, yielding (\ref{E:Cond1}). 

To prove (\ref{E:Cond4}), one observes that 
$$
\bigcap_{R \in \mathcal{R}} \tilde{R} = C^S(G) \ \ \text{and} \ \ \bigcap \tilde{Z}(R , x) = Z_{\widehat{\Gamma}}(x),  
$$
and that for any $R_1, \ldots , R_d \in \mathcal{R}$, we have 
$$
\tilde{Z}(R_1, x) \cap \cdots \cap \tilde{Z}(R_d, x) = \tilde{Z}(R_1 \cap \cdots \cap R_d, x). 
$$
So, if (\ref{E:Cond4}) does not hold, then due to the compactness of $C^S(G)$, one can find $R' \in mathcal{R}$ such that $\gamma^nC^S(G) \cap \tilde{Z}(R', x) = \varnothing$. Next, being a closed 
subgroup of $\widehat{\Gamma}$, the group $\tilde{Z}(R', x)$ is also compact, and hence there exists $R'' \in \mathcal{R}$ such that $\gamma^nR'' \cap \tilde{Z}(R', x) = \varnothing$. Then for $R = R' \cap R''$, the condition (\ref{E:Cond3}) fails to hold, a contradiction. 
\end{proof}

Combining Theorem \ref{T:0001} and Proposition \ref{P:0003}, we obtain the following. 
\begin{cor}\label{C:0001} 
Assume that (MP) holds for $G$ over $K$ and that a subset $S \subset V^K$ contains $V^K_{\infty}$ and satisfies $\mathcal{A} \cap S = \varnothing$. If there 
exists $n \geq 1$ such that for every finite index normal subgroup $N$ of $\Gamma$ and any $y \in \overline{N} \cap \Gamma$ we have the inclusion (\ref{E:Cond2}),  then $C^S(G)$ is central. 
\end{cor}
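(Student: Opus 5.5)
This follows by chaining Proposition~\ref{P:0003} into Theorem~\ref{T:0001}; no new input is needed. I fix $n \geq 1$ as in the hypothesis, so that for every finite index normal subgroup $N$ of $\Gamma$ and every $y \in \overline{N} \cap \Gamma$ the inclusion (\ref{E:Cond2}) holds.

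\emph{Step 1.} Apply Proposition~\ref{P:0003} with this $n$. Its only hypothesis is exactly the assumed validity of (\ref{E:Cond2}) for all such $N$ and $y$. It therefore gives (\ref{E:Cond1}), i.e. $\pi(Z_{\widehat{G}^S}(x)) \supset \Gamma^n$, for every $x \in C^S(G)$, with the same $n$.

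\emph{Step 2.} Feed this into Theorem~\ref{T:0001}. First check its standing hypotheses: (MP) holds for $G$ over $K$, and $S \supset V^K_{\infty}$. Moreover, $\mathcal{A} \cap S = \varnothing$ says precisely that $S$ contains no $v \in V^K_f$ with $\mathrm{rk}_{K_v}\: G = 0$. With (\ref{E:Cond1}) from Step 1, Theorem~\ref{T:0001} yields that (\ref{E:F1}) is a central extension, i.e. $C^S(G)$ is central in $\widehat{G}^S$.

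\emph{Main point to watch.} The only thing to verify is bookkeeping: the exponent $n$ is uniform. Proposition~\ref{P:0003} produces (\ref{E:Cond1}) with the same $n$ for all $x$, which is what Theorem~\ref{T:0001} requires. There is also a minor point: Proposition~\ref{P:0003} is stated in terms of centralizers in $\widehat{\Gamma}$, whereas Theorem~\ref{T:0001} uses centralizers in $\widehat{G}^S$. This is harmless because $\widehat{\Gamma} \subset \widehat{G}^S$, so $Z_{\widehat{\Gamma}}(x) \subset Z_{\widehat{G}^S}(x)$ and the inclusion passes upward. Beyond this, there is no genuine obstacle.
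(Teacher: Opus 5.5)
Your proof is correct and is exactly the paper's argument: the corollary follows by feeding the conclusion (\ref{E:Cond1}) of Proposition~\ref{P:0003} into Theorem~\ref{T:0001}, where $\mathcal{A} \cap S = \varnothing$ is just a restatement of the rank hypothesis there. Your side remark about $\widehat{\Gamma}$ versus $\widehat{G}^S$ is correct but not needed, since Proposition~\ref{P:0003} as stated already gives (\ref{E:Cond1}) with centralizers taken in $\widehat{G}^S$.
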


\section{On Dirichlet density}\label{S:Dens} 

\subsection{Definitions and elementary properties} Let $K$ be a number field, $\mathcal{O}$ be its ring of algebraic integers, and $\mathcal{P}$ be the set of primes, i.e. the set of nonzero prime ideals of $\mathcal{O}$. (In the sequel, we will freely identify $\mathcal{P}$ with $V^K_f$.) As usual, for $\mathfrak{p} \in \mathcal{P}$, we let $\mathrm{N}\mathfrak{p}$ denote the number of elements in the quotient $\mathcal{O}/\mathfrak{p}$. Gven a subset $A \subset \mathcal{P}$, we define 
$$
\xi_{K,A}(s) = \sum_{\mathfrak{p} \in A} \frac{1}{(\mathrm{N}\mathfrak{p})^s} \ \ \text{for real} \ \ s > 1 
$$
(it is well-known that the series converges). We recall (cf. \cite[Ch. VII, \S13]{Neu}) that the {\it Dirichlet density} of $A$ is defined to be 
$$
\mathfrak{d}_K(A) = \lim_{s \to 1+0} \frac{\xi_{K,A}(s)}{\log((s - 1)^{-1})}
$$
if the limit in the right-hand side exists. We also define the {\it upper Dirichlet density} of $A$ by 
$$
\bar{\mathfrak{d}}_K(A) = \varlimsup_{s \to 1+0} \frac{\xi_{K,A}(s)}{\log((s - 1)^{-1})}.
$$
We note that $\bar{\mathfrak{d}}_K(A)$ is defined for all subsets $A \subset \mathcal{P}$, and is equal to the usual Dirichlet density 
$\mathfrak{d}_K(A)$ if the latter exists. We also recall that given a finite Galois extension $L/K$, by Chebotarev's Density Theorem, 
the set $\mathrm{Spl}(L/K)$ of primes of $K$ that set completely in $L$ has Dirichlet density 
$$
\mathfrak{d}_K(\mathrm{Spl}(L/K)) = \frac{1}{[L : K]}. 
$$  

\begin{lemma}\label{L:Dens1}

\begin{enumerate}[ \rm (1)]

\item For any two subsets $A , B \subset \mathcal{P}$, we have $\bar{\mathfrak{d}}_K(A \cup B) \leq \bar{\mathfrak{d}}_K(A) + \bar{\mathfrak{d}}_K(B)$.

\vskip1mm 

\item If $A_1, \ldots , A_r \subset \mathcal{P}$ are subsets such that $\bar{\mathfrak{d}}_K(A_1 \cup \cdots \cup A_r) \geq \varepsilon > 0$ then there exists 
$i \in \{1, \ldots , r\}$ such that $\bar{\mathfrak{d}}_K(A_i) \geq \varepsilon/r$. 

\end{enumerate}
\end{lemma}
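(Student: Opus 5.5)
Both parts follow directly from the definition of $\bar{\mathfrak{d}}_K$ as a $\varlimsup$, together with the elementary subadditivity of $\xi_{K,\cdot}(s)$.

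For (1), fix real $s > 1$. Every prime of $A \cup B$ lies in $A$ or in $B$, and all terms of the series are positive. Hence
$$
\xi_{K, A \cup B}(s) \leq \xi_{K,A}(s) + \xi_{K,B}(s).
$$
Divide by $\log((s-1)^{-1})$, which is positive for $s$ close to $1$, and take $\varlimsup_{s \to 1+0}$. Since the upper limit of a sum is at most the sum of the upper limits, we get $\bar{\mathfrak{d}}_K(A \cup B) \leq \bar{\mathfrak{d}}_K(A) + \bar{\mathfrak{d}}_K(B)$.

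We should check that no $\infty - \infty$ issue arises. Every $\xi_{K,A}(s)$ is dominated by $\xi_{K,\mathcal{P}}(s)$, and $\xi_{K,\mathcal{P}}(s) \sim \log((s-1)^{-1})$, so all the upper densities lie in $[0,1]$. The ratios are therefore bounded near $s = 1$ and the $\varlimsup$ inequality is legitimate.

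For (2), induct on $r$ using (1) to obtain
$$
\varepsilon \leq \bar{\mathfrak{d}}_K(A_1 \cup \cdots \cup A_r) \leq \sum_{i=1}^r \bar{\mathfrak{d}}_K(A_i).
$$
If every $\bar{\mathfrak{d}}_K(A_i)$ were $< \varepsilon/r$, the sum would be $< \varepsilon$, which is a contradiction. So some $i$ satisfies $\bar{\mathfrak{d}}_K(A_i) \geq \varepsilon/r$.

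The only point requiring care is the justification of the upper-limit subadditivity, namely the boundedness noted in (1). That relies on the standard fact $\xi_{K,\mathcal{P}}(s) = \log((s-1)^{-1}) + O(1)$ as $s \to 1+0$ (cf. \cite[Ch. VII, \S13]{Neu}). There is no real obstacle beyond this.
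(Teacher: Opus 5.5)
Your proposal is correct and follows the paper's proof essentially verbatim: (1) from $\xi_{A \cup B}(s) \leq \xi_A(s) + \xi_B(s)$, dividing by $\log((s-1)^{-1})$ and taking upper limits, and (2) by contradiction after applying (1) repeatedly. Your boundedness check is a harmless extra; since all the ratios are nonnegative, subadditivity of $\varlimsup$ holds even without it.
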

\begin{proof}
(1): We obviously have 
$$
\xi_{A \cup B}(s) \leq \xi_A(s) + \xi_B(s) \ \ \text{for} \ \ s > 1.  
$$
Dividing by $\log((s - 1)^{-1})$ and taking upper limits (using the sum rule for these), we obtain our claim. 

\vskip1mm 

(2): Indeed, if we assume that $\bar{\mathfrak{d}}_K(A_i) < \varepsilon/r$ for all $i$, then applying part (1) repeatedly, we would obtain that $\bar{\mathfrak{d}}_K(A_1 \cup \cdots \cup A_r) < \varepsilon$, 
a contradiction. 
\end{proof}

\begin{lemma}\label{L:Dens2}
Suppose subsets  $A , B \subset \mathcal{P}$ have Dirichlet density and are contained in a subset $C \subset \mathcal{P}$ that also has Dirichlet 
density. Then 
$$
\bar{\mathfrak{d}}_K(A \cap B) \geq \mathfrak{d}_K(A) + \mathfrak{d}_K(B) - \mathfrak{d}_K(C).
$$
\end{lemma}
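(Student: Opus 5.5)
The plan is to argue with the Dirichlet series directly, through inclusion–exclusion. Since $A \cup B \subset C$ and all the sets consist of primes, for every real $s > 1$ the partial sums satisfy
$$
\xi_{K, A\cap B}(s) = \xi_{K,A}(s) + \xi_{K,B}(s) - \xi_{K, A \cup B}(s) \geq \xi_{K,A}(s) + \xi_{K,B}(s) - \xi_{K,C}(s).
$$
The equality holds because each prime in $A \cap B$ is counted twice in $\xi_{K,A} + \xi_{K,B}$ and once in $\xi_{K,A\cup B}$. All series have nonnegative terms and converge for $s > 1$, so the rearrangement is legitimate. The inequality then follows from $A \cup B \subset C$ and the nonnegativity of the terms.

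Next I divide both sides by $\log((s-1)^{-1})$, which is positive for $1 < s < 2$. This gives
$$
\frac{\xi_{K,A\cap B}(s)}{\log((s-1)^{-1})} \geq \frac{\xi_{K,A}(s)}{\log((s-1)^{-1})} + \frac{\xi_{K,B}(s)}{\log((s-1)^{-1})} - \frac{\xi_{K,C}(s)}{\log((s-1)^{-1})}.
$$
By hypothesis each of the three quotients on the right has a genuine limit as $s \to 1+0$, namely $\mathfrak{d}_K(A)$, $\mathfrak{d}_K(B)$ and $\mathfrak{d}_K(C)$. So the right-hand side converges to $\mathfrak{d}_K(A) + \mathfrak{d}_K(B) - \mathfrak{d}_K(C)$.

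Finally I take the upper limit of both sides. The upper limit of the left-hand side is by definition $\bar{\mathfrak{d}}_K(A\cap B)$, and it is at least the limit of the right-hand side. This is just monotonicity of $\varlimsup$, together with the fact that $\varlimsup$ of a convergent expression equals its limit. (The argument actually yields the stronger bound with $\varliminf$, but only the upper density is needed.)

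There is no real obstacle here. The only point needing care is that the existence of the densities of $A$, $B$ and $C$ is used precisely so that the right-hand side has an honest limit. If we had only upper densities, subtracting $\bar{\mathfrak{d}}_K(C)$ would not be justified, since upper limits are only subadditive.
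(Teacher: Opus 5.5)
Your proof is correct and is the same as the paper's: the identity $\xi_{A\cap B}(s)=\xi_A(s)+\xi_B(s)-\xi_{A\cup B}(s)\geq \xi_A(s)+\xi_B(s)-\xi_C(s)$, then division by $\log((s-1)^{-1})$, then the upper limit, using that the right-hand side has a genuine limit. Your side remark that the same argument also bounds the lower limit is correct, though not needed.
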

\begin{proof}
We have 
$$
\xi_{A \cap B}(s) = \xi_A(s) + \xi_B(s) - \xi_{A \cup B}(s) \geq \xi_A(s) + \xi_B(s) - \xi_{C}(s).
$$
Dividing by $\log((s - 1)^{-1})$ and taking upper limits as $s \to 1+0$ we obtain our claim since the usual limit of the ratio 
in the right-hand side exists and equals $\mathfrak{d}_K(A) + \mathfrak{d}_K(B) - \mathfrak{d}_K(C)$. 
\end{proof}

\begin{lemma}\label{L:Dens3} 
Let $A_0, A_1, \ldots , A_r$ be subsets of $\mathcal{P}$ that have Dirichlet density, and assume that all sets are contained in a subset $C \subset \mathcal{P}$ that also has 
Dirichlet density and that the union $A_1 \cup \cdots \cup A_r$ 
has Dirichlet density as well. If 
$$
\theta := \mathfrak{d}_K(A_0) + \mathfrak{d}_K(A_1 \cup \cdots \cup A_r) - \mathfrak{d}_K(C),  
$$
then there exists $i \in \{1, \ldots , r \}$ such that $\bar{\mathfrak{d}}_K(A_0 \cap A_i) \geq \theta/r$.   
\end{lemma}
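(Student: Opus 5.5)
The plan is to reduce to Lemmas \ref{L:Dens2} and \ref{L:Dens1}(2). Set $B := A_1 \cup \cdots \cup A_r$. By hypothesis $A_0$, $B$ and $C$ all have Dirichlet density, and $A_0, B \subset C$. So Lemma \ref{L:Dens2} applies to the pair $A_0, B$ and gives
$$
\bar{\mathfrak{d}}_K(A_0 \cap B) \geq \mathfrak{d}_K(A_0) + \mathfrak{d}_K(B) - \mathfrak{d}_K(C) = \theta.
$$

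Next, we distribute the intersection over the union:
$$
A_0 \cap B = (A_0 \cap A_1) \cup \cdots \cup (A_0 \cap A_r).
$$
If $\theta > 0$, Lemma \ref{L:Dens1}(2) applied with $\varepsilon = \theta$ to the sets $A_0 \cap A_i$ produces an index $i$ with $\bar{\mathfrak{d}}_K(A_0 \cap A_i) \geq \theta/r$, as required.

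If $\theta \leq 0$, the claim is trivial. Indeed, $\xi_{K,A}(s) \geq 0$ and $\log((s-1)^{-1}) > 0$ for $s$ close to $1$, so every upper density is nonnegative, and therefore $\bar{\mathfrak{d}}_K(A_0 \cap A_1) \geq 0 \geq \theta/r$. This case only has to be checked separately because Lemma \ref{L:Dens1}(2) was stated for $\varepsilon > 0$.

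There is no real obstacle here. The one point needing care is that the sets $A_0 \cap A_i$ need not have a Dirichlet density themselves, which is why the conclusion is phrased in terms of upper density. Lemma \ref{L:Dens1} is stated for arbitrary subsets, so it applies to them without further justification.
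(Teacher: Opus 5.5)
Your proposal is correct and follows the paper's proof: Lemma \ref{L:Dens2} applied to $A_0$ and $A_1 \cup \cdots \cup A_r$ gives upper density at least $\theta$ for their intersection, and Lemma \ref{L:Dens1}(2) applied to the sets $A_0 \cap A_i$ gives the conclusion. Your separate treatment of the case $\theta \leq 0$ is a harmless extra that the paper leaves implicit.
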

\begin{proof}
By Lemma \ref{L:Dens2}, we have 
$$
\bar{\mathfrak{d}}_K(A_0 \cap (A_1 \cup \cdots \cup A_r)) \geq \mathfrak{d}_K(A_0) + \mathfrak{d}_K(A_1 \cup \cdots \cup A_r) - \mathfrak{d}_K(C) = \theta. 
$$
Thus, $\bar{\mathfrak{d}}_K((A_0 \cap A_1) \cup \cdots \cup (A_0 \cap A_r)) \geq \theta$, so our claim follows from Lemma \ref{L:Dens1}(2). 
\end{proof}

\begin{lemma}\label{L:Dens4}
Let $A_1, \ldots A_r$ be subsets of $\mathcal{P}$, and set $A = A_1 \cup \cdots \cup A_r$.  \vskip1mm 
\begin{enumerate}[\rm (1)]
  
\item $\displaystyle \xi_A(s) = \sum_{1 \leq i \leq r} \xi_{A_i}(s)- \sum_{1 \leq i_1 < i_2 \leq r} \xi_{A_{i_1} \cap A_{i_2}}(s) + \cdots + (-1)^{r-1} \xi_{A_1 \cap \cdots \cap A_r}(s)$;   

\vskip1mm 

\item if all intersections $A_{i_1} \cap \cdots \cap A_{i_{\ell}}$ with $i_1, \ldots , i_{\ell} \in \{1, \ldots , r\}$ and $1 \leq \ell \leq r$ have Dirichlet density then so does $A$ and 
$$
\mathfrak{d}_K(A) = \sum_{1 \leq i \leq r} \mathfrak{d}_K(A_i) - \sum_{1 \leq i_1 < i_2 \leq r} \mathfrak{d}_K(A_{i_1} \cap A_{i_2}) + \cdots + (-1)^{r-1}\mathfrak{d}_K(A_1 \cap \cdots \cap A_r). 
$$
\end{enumerate}
\end{lemma}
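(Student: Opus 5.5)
Part (1) is the inclusion–exclusion principle applied termwise to the Dirichlet series. For every real $s>1$ the series $\xi_{K,B}(s)$ converges absolutely for each $B \subset \mathcal{P}$, so the sums may be rearranged freely. I would write
$$
\xi_{K,A}(s) = \sum_{\mathfrak{p} \in \mathcal{P}} \chi_A(\mathfrak{p}) (\mathrm{N}\mathfrak{p})^{-s},
$$
where $\chi_A$ is the characteristic function of $A$. The classical pointwise identity
$$
\chi_{A_1 \cup \cdots \cup A_r} = \sum_i \chi_{A_i} - \sum_{i_1<i_2} \chi_{A_{i_1}\cap A_{i_2}} + \cdots + (-1)^{r-1}\chi_{A_1\cap\cdots\cap A_r}
$$
follows from $1-\chi_A = \prod_i (1-\chi_{A_i})$ after expanding the product. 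Multiplying this identity by $(\mathrm{N}\mathfrak{p})^{-s}$ and summing over $\mathfrak{p}$ gives (1). Each of the finitely many resulting series converges absolutely, so splitting the sum into these series is legitimate.

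For part (2), fix $s>1$ and divide the identity in (1) by $\log((s-1)^{-1})$. This quantity is positive for $1<s<2$, so the division is harmless. By hypothesis, each term on the right-hand side, namely $\xi_{A_{i_1}\cap\cdots\cap A_{i_\ell}}(s)/\log((s-1)^{-1})$, has a limit as $s \to 1+0$, equal to $\mathfrak{d}_K(A_{i_1}\cap\cdots\cap A_{i_\ell})$. The right-hand side is a finite signed sum of such terms, so it also has a limit, equal to the corresponding alternating sum of densities. Hence the left-hand side $\xi_A(s)/\log((s-1)^{-1})$ converges to that value. This is exactly the statement that $A$ has Dirichlet density given by the stated formula.

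No step presents a genuine obstacle. The only points to check are the absolute convergence for $s>1$, which justifies the rearrangement in (1), and the fact that a limit of a finite linear combination is the same linear combination of the limits. The case $\ell=1$ is included in the hypothesis, so each $A_i$ itself has density.
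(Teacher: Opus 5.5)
Your proposal is correct. Part (2) is exactly the paper's argument: divide the identity from (1) by $\log((s-1)^{-1})$ and pass to the limit termwise.

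For part (1) your route is slightly different. The paper inducts on $r$ directly at the level of the $\xi$-functions. It uses the two-set case $\xi_{B\cup C}=\xi_B+\xi_C-\xi_{B\cap C}$ together with $(A_1\cup\cdots\cup A_{r-1})\cap A_r=(A_1\cap A_r)\cup\cdots\cup(A_{r-1}\cap A_r)$, and then regroups the signs. You instead prove the inclusion--exclusion identity pointwise for characteristic functions via $1-\chi_A=\prod_i(1-\chi_{A_i})$, and then sum against $(\mathrm{N}\mathfrak{p})^{-s}$.

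Your version separates the combinatorial content from the analytic content. It makes the sign bookkeeping automatic and states explicitly the convergence justification that the paper leaves implicit. The paper's induction only ever needs the two-set case, which is itself the pointwise identity $\chi_{B\cup C}=\chi_B+\chi_C-\chi_{B\cap C}$, so the two arguments are equally elementary.
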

\begin{proof}
(1): Induction on $r$. The claim is obvious for $r = 1$ and is easy to prove for $r = 2$. Let now $r > 2$. Using the case $r = 2$ and the induction hypothesis, we obtain  
$$
\xi_A(s) = \xi_{A_1 \cup \cdots \cup A_{r-1}}(s) + \xi_{A_r}(s) - \xi_{(A_1 \cap A_r) \cup \cdots \cup (A_{r-1} \cap A_r)}(s) =  
$$
$$
\sum_{1 \leq i \leq r-1} \xi_{A_i}(s) - \sum_{1 \leq i_1 < i_2 \leq r-1} \xi_{A_{i_1} \cap A_{i_2}}(s) + \cdots + (-1)^{r-2} \xi_{A_1 \cap \cdots \cap A_{r-1}}(s) + \xi_{A_r}(s) - 
$$
$$
\left( \sum_{1 \leq i \leq r-1} \xi_{A_i \cap A_r}(s) - \sum_{1 \leq i_1 < i_2 \leq r-1} \xi_{A_{i_1} \cap A_{i_2} \cap A_{r}}(s) + \cdots + (-1)^{r-2} \xi_{A_1 \cap \cdots \cap A_r}(s)   \right) = 
$$
$$
\sum_{1 \leq i \leq r} \xi_{A_i}(s) - \sum_{1 \leq i_1 < i_2 \leq r} \xi_{A_{i_1} \cap A_{i_2}}(s) + \cdots + (-1)^{r-1} \xi_{A_1 \cap \cdots \cap A_r}(s), 
$$
as required. 

\vskip1mm 

(2): Dividing the expression for $\xi_A(s)$ from part (1) by $\log((s - 1)^{-1})$ and taking the limit as $s \to 1+0$, we obtain our claim. 
\end{proof}

\vspace{-5mm} 

\subsection{An application} We will now apply these results to a situation that arises in the proof of Theorem \ref{T:Main}. Let $M$ be a finite Galois extension of $K$ of degree $m$, and let $P_1, \ldots , P_r$ be finite Galois extensions of $K$ such that each $P_i$ contain $M$ and has  degree $t$ over $M$. We assume in addition that $P_1, \ldots , P_r$ are linearly disjoint over $M$, i.e. the tensor product $P_1 \otimes_M \cdots \otimes_M P_r$ is a field. Then for any $i_1, \ldots , i_{\ell} \in \{1, \ldots , r \}$, the degree of the compositum 
$$
[P_{i_1} \cdots P_{i_{\ell}} : K] = m \cdot t^{\ell}. 
$$ 
Set $A_i = \mathrm{Spl}(P_i/K)$, observing that $A_{i_1} \cap \cdots \cap A_{i_{\ell}} = \mathrm{Spl}(P_{i_1} \cdots P_{i_{\ell}}/K)$. It follows from Chebotarev's Theorem that the relevant Dirichlet densities have the following values: 
$$
\mathfrak{d}_K(A_i) = m^{-1} t^{-1} \ \ \text{and} \ \ \mathfrak{d}_K(A_{i_1} \cap \cdots \cap A_{i_{\ell}}) = m^{-1} t^{-\ell}. 
$$
It follows from Lemma \ref{L:Dens4}(2) that the union $A = A_1 \cup \cdots \cup A_r$ has Dirichlet density which equals 
\begin{equation}\label{E:Dens1}
\mathfrak{d}_K(A) = r \cdot m^{-1} t^{-1} - \binom{r}{2} \cdot m^{-1} t^{-2} + \cdots + (-1)^{r-1} m^{-1} t^{-r} = m^{-1} \cdot \left( 1 - \left( 1 - t^{-1}  \right)^r \right). 
\end{equation}
\begin{prop}\label{P:Dens1}
Keep the notations and conventions of the current subsection. Let $S \subset V^K$ be a subset such that $S \cap \mathrm{Spl}(M/K)$ has Dirichlet density, and set 
$$
\theta := \mathfrak{d}_K(S \cap \mathrm{Spl}(M/K)) - m^{-1} \cdot (1 - t^{-1})^r. 
$$
Then there exists $i \in \{ 1, \ldots , r \}$ such that $\bar{\mathfrak{d}}_K(S \cap \mathrm{Spl}(P_i/K)) \geq \theta/r$.   
\end{prop}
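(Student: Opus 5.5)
This is a direct application of Lemma \ref{L:Dens3}. We take $C = \mathrm{Spl}(M/K)$, $A_0 = S \cap \mathrm{Spl}(M/K)$, and $A_i = \mathrm{Spl}(P_i/K)$ for $i = 1, \ldots , r$.

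First we check the containment hypothesis. Each $P_i$ contains $M$, so every prime splitting completely in $P_i$ also splits completely in $M$. Hence $A_i \subset C$. Trivially $A_0 \subset C$ as well.

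Next we check the density hypotheses.
\begin{itemize}
\item $C$ has Dirichlet density $m^{-1}$ by Chebotarev's theorem.
\item $A_0$ has Dirichlet density by assumption.
\item Each $A_i$ has density $m^{-1}t^{-1}$.
\item The union $A = A_1 \cup \cdots \cup A_r$ has Dirichlet density given by (\ref{E:Dens1}). This comes from Lemma \ref{L:Dens4}(2), since every intersection $A_{i_1}\cap\cdots\cap A_{i_\ell} = \mathrm{Spl}(P_{i_1}\cdots P_{i_\ell}/K)$ has density $m^{-1}t^{-\ell}$ by linear disjointness.
\end{itemize}

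Now we compute the quantity $\theta$ of Lemma \ref{L:Dens3}:
$$
\mathfrak{d}_K(A_0) + \mathfrak{d}_K(A) - \mathfrak{d}_K(C) = \mathfrak{d}_K(S\cap \mathrm{Spl}(M/K)) + m^{-1}\bigl(1-(1-t^{-1})^r\bigr) - m^{-1}.
$$
The right-hand side simplifies to $\mathfrak{d}_K(S\cap\mathrm{Spl}(M/K)) - m^{-1}(1-t^{-1})^r$, which is exactly the $\theta$ in the statement. Lemma \ref{L:Dens3} therefore yields an index $i$ with $\bar{\mathfrak{d}}_K(A_0\cap A_i)\ge \theta/r$.

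Finally, because $\mathrm{Spl}(P_i/K)\subset \mathrm{Spl}(M/K)$, we have $A_0 \cap A_i = S \cap \mathrm{Spl}(P_i/K)$, which gives the claim.

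The argument has no real obstacle. The only points needing care are the containment $\mathrm{Spl}(P_i/K)\subset\mathrm{Spl}(M/K)$ and the existence of the density of the union, and (\ref{E:Dens1}) already supplies the latter.
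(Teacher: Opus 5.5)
Your proposal is correct and is essentially the paper's proof: the same choice of $A_0 = S\cap\mathrm{Spl}(M/K)$, $A_i=\mathrm{Spl}(P_i/K)$ and $C=\mathrm{Spl}(M/K)$, with (\ref{E:Dens1}) used to evaluate $\theta$ and Lemma~\ref{L:Dens3} applied directly. You spell out the containment $\mathrm{Spl}(P_i/K)\subset\mathrm{Spl}(M/K)$ and the density hypotheses more explicitly than the paper, which just says these points are clear.
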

\begin{proof}
Set 
$$
A_0 = S \cap \mathrm{Spl}(M/K), \ \ A_i = \mathrm{Spl}(P_i/K) \ \ \text{for} \ \ i = 1, \ldots , r, 
$$
and $A = A_1 \cup \cdots A_r$. Clearly, $S \cap A_i = A_0 \cap A_i$ for all $i = 1, \ldots , r$. Furthermore, the subsets $A_0, A_1, \ldots , A_r$ are all contained in $C := \mathrm{Spl}(M/K)$, 
which has Dirichlet density $m^{-1}$. In view of (\ref{E:Dens1}), we have 
$$
\mathfrak{d}_K(A_0) + \mathfrak{d}_K(A) - \mathfrak{d}_K(C) = \mathfrak{d}_K(S \cap \mathrm{Spl}(M/K)) - m^{-1} \cdot (1 - t^{-1})^r = \theta. 
$$
So, our claim follows from Lemma \ref{L:Dens3}. 
\end{proof}

\section{On almost strong approximation in tori}\label{S:ASA}

One of the key ingredients of the proof of Theorem \ref{T:Main} is a result on {\it almost strong approximation}. This concept was introduced in \cite{RT}: Given a linear algebraic group $G$ defined over a number field $K$, we say that $G$ has almost strong approximation (ASA) with respect to a subset $S \subset V^K$ if the index $[G(\mathbb{A}_S) : \overline{G(K)}]$ is finite (where $\overline{\begin{array}{c} \  \end{array}}$  denotes the closure in the $S$-adelic topology). It is well-known (cf. \cite[Ch. VII]{PlRa}) that non-simply connected groups, in particular, tori, never have ASA for any finite $S$. At the same time, it was shown already in the initial version of \cite{RT} that tori and also general reductive groups do have ASA (under appropriate assumptions) with respect to certain infinite sets of valuations called in loc. cit. {\it tractable}\footnote{A tractable subset is a set of the form $V^K_{\infty} \cup (\mathcal{P}(L/K , \mathcal{C}) \setminus \mathcal{P}_0$ where $\mathcal{P}(L/K , \mathcal{C})$ is a generalized arithmetic progression associated with a finite Galois extension $L/K$ and a conjugacy class $\mathcal{C}$ in its Galois group $G(L/K)$ and $\mathcal{P}_0$ is a set of primes having Dirichlet density zero. We also recall that $\mathcal{P}(L/K , \mathcal{C})$ consists of those primes for which the corresponding valuation $v$ is unramified in $L$ and the Frobenius automorphism $\mathrm{Fr}(w\vert v)$ belongs to $\mathcal{C}$ for some (equivalently, any) extension $ w \vert v$.}. As an application we proved Theorem \ref{T:Main} in \cite{RT} for tractable $S$ (which always have positive Dirichlet density) -- we note that we in fact use this particular case in our proof of this theorem in the general situation, see \S\ref{S:Proof}.  Subsequently, Y.~Cao and Y.~Wang \cite{CW} used the work of C.~Demarche  \cite{Dem1}, \cite{Dem2} to extend the results from \cite{RT} on ASA (but not on the  CSP!) to general sets of positive Dirichlet density. We then pointed out in the later version of \cite{RT} that this generalization can also be achieved by our techniques with minimal adjustments required only in the treatment of quasi-split tori as the ``d\'evissage'' part of our argument goes through without any changes. We will use this approach to prove the following version of the result on ASA for tori needed in our proof of Theorem \ref{T:Main}, which is stated in terms of upper Dirichlet density.   

Let $\phi(\delta , d)$ be a function defined for $0 < \delta \leq 1$ and $d \in \mathbb{N}$ with values in $\mathbb{N}$. We say that $\phi$ is {\it super-decreasing} if $\delta_1 \leq \delta_2$ implies that $\phi(\delta_2 , d)$ divides $\phi(\delta_1 , d)$ for all $d \in \mathbb{N}$. 
\begin{thm}\label{T:ASA-tori}
There exists a super-decreasing function $\phi(\delta , d)$ such that given a $d$-dimensional torus $T$ defined over a number field $K$ with the minimal splitting field $P$ and a subset $S \subset V^K$ containing $V_{\infty}^K$ for which $\bar{\mathfrak{d}}_K(S \cap \mathrm{Spl}(P/K)) =: \delta > 0$, the index 
$$
[T(\mathbb{A}_S) : \overline{T(K)}] \ \ \text{divides} \ \ \phi(\delta , d), 
$$ 
and in particular $T$ has ASA with respect to $S$.
\end{thm}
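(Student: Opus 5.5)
The plan is to follow the dévissage of \cite{RT} and make every index estimate explicit in terms of $\delta$ and $d$. The base case is quasi-split tori; the general case is reduced to it by a resolution whose shape is controlled by $d$ alone.

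\emph{Quasi-split tori.} Let $T = \mathrm{R}_{F/K}(\mathbb{G}_m)$ with $F \subset P$ and $[F:K] \le d$. Then $T(\mathbb{A}_S)/\overline{T(K)}$ is the quotient of the $S'$-idele group of $F$ (where $S'$ is the set of extensions of $S$ to $F$) by the closure of $F^\times$. By class field theory this quotient is the Galois group of the maximal abelian extension $F^{ab,S'}$ of $F$ in which all $w \in S'$ split completely. Let $E \subset F^{ab,S'}$ be a finite subextension. Every $v \in S \cap \mathrm{Spl}(P/K)$ splits completely in $F$, and each of its extensions $w$ splits completely in $E$. Hence, after passing to a Galois closure, $S \cap \mathrm{Spl}(P/K) \subset \mathrm{Spl}(\tilde E/K)$, where $\tilde E$ is the Galois closure of $EP$ over $K$. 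Chebotarev then gives $\delta \le \bar{\mathfrak{d}}_K(\mathrm{Spl}(\tilde E/K)) = [\tilde E:K]^{-1}$. This bounds $[E:F] \le \delta^{-1}$, so the quotient is finite of order at most $\lfloor \delta^{-1} \rfloor$.

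To obtain divisibility rather than a mere bound, set $\phi_0(\delta) = (\lfloor \delta^{-1} \rfloor)!$. This is super-decreasing: if $\delta_1 \le \delta_2$ then $\lfloor \delta_2^{-1} \rfloor \le \lfloor \delta_1^{-1} \rfloor$, so $\phi_0(\delta_2)$ divides $\phi_0(\delta_1)$. Since the order of a finite group divides its factorial bound, the index divides $\phi_0(\delta)$. A product of at most $d$ such tori has index dividing $\phi_0(\delta)^d$. For these factors one uses $\bar{\mathfrak{d}}_K(S \cap \mathrm{Spl}(F/K)) \ge \delta$, which holds because $\mathrm{Spl}(P/K) \subset \mathrm{Spl}(F/K)$.

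\emph{General tori.} Use a flasque or coflasque resolution $1 \to T_1 \to T_0 \to T \to 1$ (or a resolution by quasi-split tori as in \cite{RT}), all split by $P$. The key structural input is that, since the Galois group of $P/K$ embeds in $\mathrm{GL}_d(\mathbb{Z})$, there are only finitely many possibilities for it up to conjugacy, by Jordan–Zassenhaus and Minkowski. Consequently the dimensions of $T_0$ and $T_1$, and the orders of the finite cohomology groups that appear ($\mathrm{H}^1$ of $P/K$, Tate–Shafarevich-type groups, and local $\mathrm{H}^1(K_v,T_1)$ for $v$ ramified or in a finite exceptional set), are bounded by a constant $c(d)$ depending only on $d$. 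The exact sequence gives
$$
[T(\mathbb{A}_S):\overline{T(K)}] \ \text{divides} \ [T_0(\mathbb{A}_S):\overline{T_0(K)}] \cdot |\mathrm{coker}(T_0(\mathbb{A}_S) \to T(\mathbb{A}_S))| \cdot (\text{a global cohomological factor}).
$$
Here the cokernel lies in $\prod_{v \notin S} \mathrm{H}^1(K_v,T_1)$ modulo the image of global classes, and ASA for the quasi-split $T_0$ controls the first factor. One then sets $\phi(\delta,d) = \phi_0(\delta)^{c(d)} \cdot c(d)!$, which is super-decreasing and has the required divisibility.

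\emph{Main obstacle.} The difficult step is the cokernel term. The product $\prod_{v \notin S} \mathrm{H}^1(K_v,T_1)$ is infinite, so one must show that global classes together with the closure absorb it up to a bounded finite group. I would follow \cite{RT}: use the reciprocity/Poitou–Tate sequence to identify $\mathrm{coker}$ modulo the closure of $T(K)$ with a dual of a subgroup of $\mathrm{H}^1(K,\hat T_1)$ of classes vanishing locally at $v \in S$. For $v \in S \cap \mathrm{Spl}(P/K)$ these classes restrict trivially to the decomposition group. Since this set has positive upper density, Chebotarev applied to the finite extension cut out by such a class forces the class to be controlled by $\mathrm{H}^1(P/K,\cdot)$, whose size depends only on $d$. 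The upper density version of Chebotarev is all that is needed, because the argument only uses $\bar{\mathfrak{d}} \le [\,\cdot\,]^{-1}$ inclusions, as in the quasi-split case.
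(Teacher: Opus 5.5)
Your proposal is correct and follows essentially the paper's route. You use class field theory plus Chebotarev for the quasi-split case: you apply Chebotarev over $K$ to the Galois closure of $EP$ instead of transferring densities to $P$, which is an equivalent variant. You then take a resolution $1 \to T_1 \to T_0 \to T \to 1$ with $T_0$ quasi-split and use the Poitou--Tate (Nakayama--Tate) sequence, obtaining that $[T(\mathbb{A}_S):\overline{T(K)}]$ divides $[T_0(\mathbb{A}_S):\overline{T_0(K)}]\cdot|H^1(\mathrm{Gal}(P/K),X(T_1))|$.

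Your ``main obstacle'' is not actually one. Since $X(T_1)$ is split by $P$, inflation gives $H^1(K,\hat{T}_1)=H^1(P/K,\hat{T}_1)$, so the cokernel factor is bounded by a function of $d$ for every $S$, with no Chebotarev argument needed. Nor do you need local terms at ramified places, whose number is not bounded by $d$. The density hypothesis enters only through $T_0$.
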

\begin{proof}
First, let $T = \mathrm{R}_{P/K}(\mathbb{G}_m)$ where $P$ is a finite Galois extension of $K$. We then have compatible isomorphisms 
$$
T(\mathbb{A}_S) \simeq \mathbb{I}_{P, \overline{S}} \ \ \text{and} \ \ T(K) \simeq P^{\times} 
$$
where $\overline{S}$ consists of all extensions of valuations from $S$ to $P$, and $\mathbb{I}_{P , \overline{S}}$ is the group of $\overline{S}$-ideles of $P$. It is easy to see 
(cf. \cite[\S 3]{RT}) that is $S_0 = S \cap \mathrm{Spl}(P/K)$ and $\overline{S}_0$ consists of all extensions of valuations from $S_0$ to $P$ then 
$$
\bar{\mathfrak{d}}_P(\overline{S}_0) = [P : K] \cdot \bar{\mathfrak{d}}_K(S_0). 
$$
Thus, 
\begin{equation}\label{E:0401}
\bar{\mathfrak{d}}_P(\overline{S} \cap V^P_f) \geq [P : K] \cdot \bar{\mathfrak{d}}_K(S_0) \geq \bar{\mathfrak{d}}_K(S_0) = \delta.  
\end{equation}
Let us now show that 
\begin{equation}\label{E:Index01} 
[T(\mathbb{A}_S) : \overline{T(K)}] = [\mathbb{I}_{P , \overline{S}} : \overline{P^{\times}}] \leq \bar{\mathfrak{d}}_P(\overline{S} \cap V^P_f)^{-1} \leq \delta^{-1}. 
\end{equation}
The argument almost verbatim repeats the proof of Proposition 3.2 in \cite{RT}. Indeed, it is shown in \cite[proof of Proposition 3.1]{RT} that it is enough to demonstrate that for any open subgroup $B$ of 
$\mathbb{I}_{P , \overline{S}}$ containing $P^{\times}$, the (automatically finite) index $[\mathbb{I}_{P , \overline{S}} : B]$ is $\leq \bar{\mathfrak{d}}_P(\overline{S} \cap V^P_f)^{-1}$. 

Let $\pi_{\overline{S}} \colon \mathbb{I}_P \to \mathbb{I}_{P , \overline{S}}$ be the natural projection, and let $D = \pi_{\overline{S}}^{-1}(B)$. By the Existence Theorem of Global Class Field Theory (cf. \cite[Ch. VII]{ANT}), there exists an abelian extension $R/P$ of degree 
$$
m = [\mathbb{I}_P : D] = [\mathbb{I}_{P , \overline{S}} : B] 
$$
such that $N_{R/P}(\mathbb{I}_R) P^{\times} = D$. Since by construction $P_w^{\times} \subset D$ for all $w \in \overline{S}$ we conclude that 
$$
\overline{S} \cap V^P_f \subset \mathrm{Spl}(R/P). 
$$
According to Chebotarev's Density Theorem, $\mathfrak{d}_P(\mathrm{Spl}(R/P)) = m^{-1}$, so the above inclusion yields 
$$
\bar{\mathfrak{d}}_P(\overline{S} \cap V^P_f) \leq \bar{\mathfrak{d}}_P(\mathrm{Spl}(R/P)) = \mathfrak{d}_P(\mathrm{Spl}(R/P)) = m^{-1}. 
$$
Comparing with (\ref{E:0401}), we obtain (\ref{E:Index01}). Then the index $[T(\mathbb{A}_S) : \overline{T(K)}]$ divides 
$$
\nu(\delta) := \left( \lfloor \delta^{-1} \rfloor + 1  \right)!,  
$$ 
which is a super-decreasing function (here $\lfloor \cdot \rfloor$ is the floor function). 

Now, let $T$ be an arbitrary $K$-torus of dimension $d$ and having the minimal splitting field $P$. Then one can construct an exact sequence 
$$
1 \to T_1 \longrightarrow T_0 \longrightarrow T \to 1
$$
of $K$-tori and $K$-morphisms where $T_0$ is a product of $d$ copies of $\mathrm{R}_{P/K}(\mathbb{G}_m)$ (see \cite[Proposition 5.1]{RT}). One then uses the Nakayama-Tate Theorem  
(cf. \cite[Theorem 6 in \S 11.3]{Voskr}) to show that 
$$
[T(\mathbb{A}_S) : \overline{T(K)}] \ \ \text{divides} \ \ [T_0(\mathbb{A}_S) : \overline{T_0(K)}] \cdot \vert H^1(G(P/K) , X(T_1)) \vert, 
$$
where $X(T_1)$ is the group of characters of $T_1$ considered as a module over $G(P/K)$ (cf. \cite[\S 5]{RT}). It is well-known that the cohomology group $H^1(G(P/K) , X(T_1))$ is finite, but in fact one shows 
that its order always divides some number $\rho(d)$ that depends only on $d$. The first part of the proof implies that since $\mathfrak{d}_K(S \cap \mathrm{Spl}(P/K)) = \delta > 0$ we have that 
$$
[T_0(\mathbb{A}_S) : \overline{T_0(K)}]  \ \ \text{divides} \ \ \nu(\delta)^d. 
$$
So, the function $\phi(\delta , d) = \nu(\delta)^d \cdot \rho(d)$ is as required.  
\end{proof}

\begin{remark}
For our proof of Theorem \ref{T:Main} it would be sufficient to know that in the situation described in Theorem \ref{T:ASA-tori} the quotient $T(\mathbb{A}_S)/ \overline{T(K)}$ has finite exponent 
that divides some number that depends only on $\delta$ and $d$. This fact is somewhat easier to prove but the difference in the argument is not significant.  
\end{remark}

%
%
%
%

\section{Construction of generic independent tori} \label{S:GenT}

Let $G$ be an absolutely almost simple algebraic group defined over a field $K$, and let $M$ be the minimal Galois extension of $K$ over which $G$ becomes an inner form of the split group. Given a maximal $K$-torus $T$ of $G$, we let $\Phi(G , T)$ (resp., $W(G , T)$) denote the corresponding root system (resp., the Weyl group). Let $K_T$ denote the minimal splitting field of $T$ over $K$, and for an extension $L$ of $K$ we let $L_T$ denote the compositum $L\cdot K_T$. The action of the Galois group $G(L_T/L)$ on the characters of $T$ defines an injective homomorphism $\theta_{T,L} \colon G(L_T/L) \to \mathrm{Aut}(\Phi(G , T))$, and we will routinely identify $G(L_T/L)$ with its image. If $\im \theta_{T,L} \supset W(G , T)$ then $T$ is said to be {\it generic} over $L$.  Furthermore, we let $w$ denote the order of $W(G, T)$, and $c$ the number of conjugacy classes in it.

\begin{thm}\label{T:Gen} 
{\rm (\cite[Theorem 3.1]{PR-WC})} Assume that $K$ is a number field. Let $v_1, \ldots , v_c$ be $c$ nonarchimedean valuations of $K$ such that $G$ is $K_{v_i}$-split for all $i = 1, \ldots , c$. Furthermore, let $L$ be a finite Galois extension of $K$ such that $L \subset K_{v_i}$ for all $i = 1, \ldots , c$. Then for each $i \in \{1, \ldots , c\}$ one can choose a maximal $K_{v_i}$-torus $T_i^0$ of $G$ such that if a maximal $K$-torus $T$ of $G$ is $G(K_{v_i})$-conjugate to $T_i^0$ for all $i = 1, \ldots , c$ then $G(L_T/L) \supset W(G , T)$.   
\end{thm}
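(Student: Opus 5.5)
We need to prove Theorem~\ref{T:Gen} (cited from PR-WC). The plan is to choose local tori realizing every conjugacy class of $W$ as a Frobenius, and then use the fact that no proper subgroup of a finite group meets every conjugacy class.

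\medskip

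For each $i$, since $G$ is $K_{v_i}$-split, fix a $K_{v_i}$-split maximal torus $T_i^{\rm s}$ and identify its Weyl group with $W$. Enumerate the conjugacy classes $\mathcal{C}_1, \ldots , \mathcal{C}_c$ of $W$ and pick representatives $w_i \in \mathcal{C}_i$. Let $K_{v_i}^{\rm ur}$ be the unramified extension of $K_{v_i}$ of degree equal to the order of $w_i$, with Frobenius generator $\sigma_i$. Because $G$ is split, maximal $K_{v_i}$-tori are classified (up to $G(K_{v_i})$-conjugacy) by $H^1(K_{v_i}, N(T_i^{\rm s}))$, whose image in $H^1(K_{v_i}, W)$ is surjective for split groups (Steinberg/Kneser's $H^1(K_{v_i},G)=1$ for simply connected $G$, or a direct lift of $W$-cocycles via Tits' section). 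So there is a maximal $K_{v_i}$-torus $T_i^0$, split by $K_{v_i}^{\rm ur}$, such that $\sigma_i$ acts on $X(T_i^0)$ through an element conjugate to $w_i$. In other words, the image of $\mathrm{Gal}(\overline{K}_{v_i}/K_{v_i})$ in $\mathrm{Aut}(\Phi)$ is the cyclic group generated by an element of $\mathcal{C}_i$, and this image lies inside $W$.

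\medskip

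Now let $T$ be a maximal $K$-torus that is $G(K_{v_i})$-conjugate to $T_i^0$ for all $i$. Choose an extension $u_i$ of $v_i$ to $L_T$. Since $L \subset K_{v_i}$, the decomposition group $G((L_T)_{u_i}/K_{v_i})$ sits inside $G(L_T/L)$. Its image under $\theta_{T,L}$ equals the local Galois image for $T \times_K K_{v_i}$. Conjugacy by $G(K_{v_i})$ transports this image, up to an identification of root systems given by an inner automorphism, to the local image for $T_i^0$. That image is generated by an element of $\mathcal{C}_i$.

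The main subtlety is the identification. The isomorphism $\Phi(G,T)\cong\Phi(G,T_i^0)$ induced by conjugation is canonical up to $W$, and it intertwines $W(G,T)$ with $W(G,T_i^0)$. Hence, viewed in $\mathrm{Aut}(\Phi(G,T))$, the group $\im\theta_{T,L}$ contains an element of the $W(G,T)$-class corresponding to $\mathcal{C}_i$. Being a class of $W$ rather than of $\mathrm{Aut}(\Phi)$ is preserved because we conjugate by $W$ only. This holds for every $i$.

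\medskip

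Set $H = \im\theta_{T,L} \cap W(G,T)$. Then $H$ meets every conjugacy class of $W(G,T)$. Jordan's lemma says that a proper subgroup $H$ of a finite group $W$ satisfies $\bigcup_{g} gHg^{-1} \neq W$, because $|\bigcup gHg^{-1}| \le [W:H](|H|-1)+1 < |W|$. It follows that $H = W(G,T)$, i.e.\ $G(L_T/L) \supset W(G,T)$.

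The step I expect to be the main obstacle is the existence of $T_i^0$ with prescribed local Frobenius class. This requires lifting each element of $H^1(K_{v_i},W)$ coming from an unramified homomorphism $\mathrm{Gal}\to\langle w_i\rangle$ to a genuine $K_{v_i}$-torus in $G$. I would do this with the standard fibration $H^1(K_{v_i},N)\to H^1(K_{v_i},W)$, using the vanishing of $H^1(K_{v_i},T_i^{\rm s})$-obstructions for split tori together with $H^1(K_{v_i},G)=1$. Alternatively, I would cite the known result that over a field with a split group every $W$-torsor arises from a maximal torus (Raghunathan, or \cite{PR-Gen1}).
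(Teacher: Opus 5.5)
Your proof is correct and is essentially the standard argument for \cite[Theorem 3.1]{PR-WC}, which the paper quotes without proving: for each conjugacy class of $W$, realize it as the local Galois image of a maximal $K_{v_i}$-torus $T_i^0$; use $L \subset K_{v_i}$ to place the decomposition groups inside $G(L_T/L)$; and conclude from the fact that no proper subgroup of a finite group meets every conjugacy class. One small correction concerns the local existence step. Its justification does not come from ``$H^1$ of split tori'': the obstruction to lifting from $H^1(K_{v_i},W)$ to $H^1(K_{v_i},N)$ would lie in $H^2$ of a twisted torus, and a Tits-section lift is not a cocycle in general. The clean way is to take an integral $n \in N(T_i^{\mathrm{s}})$ representing $w_i$ and, by Lang's theorem plus Hensel's lemma, find $g$ over the integers of the maximal unramified extension with $g^{-1}\mathrm{Fr}(g)=n$. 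Then $T_i^0 = g\,T_i^{\mathrm{s}}g^{-1}$ is defined over $K_{v_i}$ and Frobenius acts on it through $w_i$.
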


\vskip2mm 

We will need the following consequence of the theorem.

\begin{prop}\label{P:Gen} 
Fix $r \geq 1$ and suppose we are given $cr$ nonarchimedean valuations 
$$
v_1^{(1)}, \ldots , v_c^{(1)}, \ldots , v_1^{(r)}, \ldots , v_c^{(r)}
$$
of $K$ such that $G$ is $K_{v_i^{(j)}}$-split  for all $i = 1, \ldots , c$ and $j = 1, \ldots , r$. Then for each 
$i \in \{ 1, \ldots , c \}$ and $j, k \in \{ 1, \ldots , r\}$ one can choose a maximal $K_{v_i^{(j)}}$-torus $T_{ijk}^0$ of $G$ so that 
if $T_1, \ldots , T_r$ are maximal $K$-tori of $G$ such that $T_k$ is $G( K_{v_i^{(j)}})$-conjugate to $T_{ijk}^0$ for all $i = 1, \ldots , c$ 
and $j = 1, \ldots , r$ then the splitting fields $K_{T_1}, \ldots , K_{T_r}$ (which all contain $M$) satisfy the following properties:

\vskip2mm 

\begin{enumerate}[{\rm (1)}]
  
  \item for each $k = 1, \ldots , r$ we have $G(K_{T_k}/K) \supset W(G , T_k)$, and consequently \\  $G(K_{T_k}/M) = W(G , T_k)$; 
  
  \vskip1mm 
  
  \item $K_{T_1}, \ldots , K_{T_r}$ are linearly disjoint over $M$ (in which case $T_1, \ldots , T_r$ are said to be \newline  independent).

\end{enumerate}

\end{prop}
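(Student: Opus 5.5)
We construct the tori inductively in $k$, applying Theorem \ref{T:Gen} once per torus with a growing field $L$. The $cr$ valuations are grouped into $r$ blocks $\{v_1^{(j)}, \ldots , v_c^{(j)}\}$. The tori $T_{ijk}^0$ will be chosen so that the $k$-th torus $T_k$ is controlled, via Theorem \ref{T:Gen}, by the $k$-th block of valuations $v_1^{(k)}, \ldots , v_c^{(k)}$. The only constraint imposed at the other blocks ($j \neq k$) is that $T_k$ be $K_{v_i^{(j)}}$-split.

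Concretely, for $j \neq k$ we let $T_{ijk}^0$ be a $K_{v_i^{(j)}}$-split maximal torus, which exists since $G$ splits over $K_{v_i^{(j)}}$. For $j = k$ we let $T_{ikk}^0$ be the torus supplied by Theorem \ref{T:Gen} applied to the valuations $v_1^{(k)}, \ldots , v_c^{(k)}$ and the field $L = M$. This requires $M \subset K_{v_i^{(k)}}$, which holds because $G$ is $K_{v}$-split, hence $K_v$-inner, so $v$ splits completely in $M$.

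Now let $T_1, \ldots , T_r$ satisfy the conjugacy conditions. For each $k$, Theorem \ref{T:Gen} gives $G(M_{T_k}/M) \supset W(G,T_k)$. Since $M \subset K_{T_k}$, we have $M_{T_k} = K_{T_k}$, and $G(K_{T_k}/M)$ lies in $W(G,T_k)$: over $M$ the group is inner, so the $*$-action is trivial and the Galois image lands in the Weyl group. This yields (1).

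For (2), let $E_k$ denote the compositum of the $K_{T_l}$ with $l \neq k$, a Galois extension of $K$. Each $T_l$ is $K_{v_i^{(k)}}$-split for $l \neq k$, because $T_l$ is $G(K_{v_i^{(k)}})$-conjugate to the split torus $T_{ikl}^0$. Hence $K_{T_l} \subset K_{v_i^{(k)}}$, and therefore $E_k \subset K_{v_i^{(k)}}$ for all $i$. So Theorem \ref{T:Gen} could be applied with $L = E_k$. However, the choice of $T^0_{ikk}$ in that theorem may depend on $L$, and $E_k$ is not known in advance.

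This dependence is the main obstacle, and I would handle it in one of two ways. The first is to check in \cite{PR-WC} that the tori $T_i^0$ in Theorem \ref{T:Gen} depend only on the $v_i$ and not on $L$: they are chosen so that the local Galois images realize all conjugacy classes of $W$, which works for any $L$ contained in all $K_{v_i}$. The second is to argue directly. By the first step, $G(K_{T_k}/M) = W(G,T_k)$. Moreover $G(E_kK_{T_k}/E_k) \cong G(K_{T_k}/K_{T_k}\cap E_k)$ contains an element of every conjugacy class of $W$, since the Frobenius elements at the $v_i^{(k)}$ lie in it ($E_k$ being split there). A subgroup of a finite group meeting every conjugacy class is the whole group, so $K_{T_k} \cap E_k = M$.

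It remains to deduce linear disjointness of $K_{T_1}, \ldots , K_{T_r}$ over $M$. Since all fields are Galois over $M$, it suffices to show $K_{T_k}\cap E_k = M$ for the consecutive composita: apply the argument with $E_k$ replaced by $K_{T_1}\cdots K_{T_{k-1}}$. That compositum is also split at $v_i^{(k)}$, so the same reasoning gives $K_{T_k} \cap (K_{T_1}\cdots K_{T_{k-1}}) = M$. Induction on $k$ then gives $[K_{T_1}\cdots K_{T_r}:M] = \prod_k |W(G,T_k)|$, which is (2).
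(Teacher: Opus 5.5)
Your proposal is correct and follows essentially the paper's argument. In both, block $k$ carries the tori from Theorem~\ref{T:Gen} for $T_k$, and the other blocks force split tori so that the compositum of the other splitting fields lies in every $K_{v_i^{(k)}}$; degrees are then built up inductively. The paper imposes splitness only at blocks $j<k$ and inducts via composita of the later tori, which makes no difference.

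The dependence on $L$ that you flag is genuine for Theorem~\ref{T:Gen} as stated. The paper resolves it exactly as in your first option: it tacitly uses that the tori $T_i^0$ constructed in the proof of Theorem~\ref{T:Gen} depend only on the $v_i$ and work for every Galois $L \subset K_{v_i}$. Your second, "direct" option also relies on that construction, since it needs the Frobenius at the $v_i^{(k)}$ to land in prescribed Weyl conjugacy classes, which the black-box statement alone does not give.
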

\begin{proof}
For $k = 1$, we choose the tori $T_{111}^0, \ldots , T_{c11}^0$ to be the tori $T_1^0, \ldots , T_c^0$ from Theorem \ref{T:Gen} for $v_i = v_i^{(1)}$ $(i = 1, \ldots , c)$, while the tori $T_{ij1}^0$ for $i = 1, \ldots , c$ and $j = 2, \ldots , r$ can be chosen arbitrarily. Suppose $1 \leq \ell < r$ and the tori
$T_{ijk}^0$ for $i = 1, \ldots , c$, $j = 1, \ldots , r$ and $k = 1, \ldots , \ell$ have already been selected, and describe how to choose these tori for $k = \ell + 1$. Let $\mathscr{T}_{ij}$ be 
a maximal $K_{v_i^{(j)}}$-split torus of $G$. 
We then define
$$
T_{ij(\ell+1)}^0 = \mathscr{T}_{ij} \ \ \text{for} \ \ i = 1, \cdots , c \ \ \text{and} \ \ j = 1, \ldots , \ell,
$$
with $T_{i(\ell+1)(\ell+1)}^0$ for $i = 1, \ldots , c$ being the tori $T_1^0, \ldots , T_c^0$ from Theorem \ref{T:Gen} constructed for $v_i = v_i^{(\ell+1)}$ for $i = 1, \ldots , c$.

Now, suppose we have maximal $K$-tori $T_1, \ldots , T_r$ of $G$ such that each $T_k$ is $G\left( K_{v_i^{(j)}} \right)$-conjugate to $T_{ijk}^0$ for $i = 1, \ldots , c$ and $j = 1, \ldots , r$. According to Theorem \ref{T:Gen}, for each $k = 1, \ldots , r$ and any finite Galois extension $L$ of $K$ such that $L \subset K_{v_i^{(k)}}$ for $i = 1, \ldots , c$ we have the inclusion
$$
G(L_{T_k}/L) \supset W(G , T_k). 
$$
In particular, $G(K_{T_k}/K) \supset W(G , T_k)$, hence $G(K_{T_k}/M) = W(G , T_k)$ (cf. \cite[Lemma 4.1(b)]{PR-WC}). So, we only need to prove that the splitting fields $K_{T_1}, \ldots , K_{T_r}$ are linearly disjoint over $M$. For this we will prove by induction on $d$, starting with $d = 0$, that
$$
[K_{T_{r-d}} \cdots K_{T_r} : M] = w^{d+1},
$$
where $w$ is the order of the Weyl group. There is nothing to prove if $d = 0$, so let us assume that $d > 0$ and for $L := K_{T_{r-d+1}} \cdots K_{T_r}$, the degree $[L : M] = w^d$. To complete the inductive argument, we will show that $[L_{T_{r-d}} : L] = w$. For this we note that by our construction each $T_k$ with $k \geq r - d + 1$ is $K_{v_i^{(r-d)}}$-isomorphic to $\mathscr{T}_{i(r-d)}$, yielding the inclusion
$$
K_{T_k} \subset K_{v_i^{(r-d)}} \ \ \text{for} \ \ i = 1, \ldots , c.
$$
Thus, $L \subset K_{v_i^{(r-d)}}$ for $i = 1, \ldots c$, implying by Theorem \ref{T:Gen} that $G(L_{T_k}/L) \supset W(G , T_k)$. Since $L \supset M$, we actually obtain that $[L_{T_k} : L] = w$, as required.

\end{proof}

\section{Proof of the Main Theorem}\label{S:Proof}

Set $\mathbb{S} := \mathrm{Spl}(M/K) \setminus (S \cap \mathrm{Spl}(M/K))$. We will consider two cases. 

\vskip2mm 

{\it Case 1.} $\mathbb{S}$ \underline{is finite}. In the terminology introduced in \cite{RT} this means that $S$ almost contains $\mathrm{Spl}(M/K)$, which is a {\it generalized arithmetic progression}. In this situation the triviality of $C^S(G)$ was established in \cite[Theorem B]{PR-CSP} and \cite[Theorem 1.5]{RT}.

\vskip2mm 

{\it Case 2.} $\mathbb{S}$ \underline{is infinite}. In view of Corollary \ref{C:0001}, to complete the proof of Theorem \ref{T:Main} in this case, it is enough to prove the following. 
\begin{prop}\label{P:Main} 
Assume that $\mathfrak{d}_K(S \cap \mathrm{Spl}(M/K)) > 0$  and $\mathbb{S}$ is infinite. Then there exists an integer $n \geq 1$ such that for every finite index normal subgroup $N$ of $\Gamma = G(\mathcal{O}(S))$ and any $y \in \overline{N} \cap \Gamma$ we have the inclusion $Z(N , y)(\overline{N} \cap \Gamma) \supset \Gamma^n$. 
\end{prop}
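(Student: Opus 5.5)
Fix $N$ of finite index normal in $\Gamma$ and $y \in \overline{N}\cap\Gamma$. The idea, following \cite{PR-CSP} and \cite{RT}, is to find finitely many maximal $K$-tori $T_1,\dots,T_r$ of $G$ with three properties: each has ASA with respect to $S$ with index dividing a fixed integer $n_0$; every element of $\Gamma^{n}$ can be written as a product of elements of $T_k(\mathcal{O}(S))$ times an element of $\overline{N}\cap\Gamma$; and $y$ commutes with these tori modulo $N$. The integer $n$ has to be fixed before $N$ and $y$, so all the densities and degrees that control it must be chosen uniformly at the start.

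\textbf{Choice of the uniform parameters.} Put $m=[M:K]$, let $w=|W(G,T)|$ be the order of the Weyl group, and let $c$ be the number of its conjugacy classes. Set $\delta_0:=\mathfrak{d}_K(S\cap \mathrm{Spl}(M/K))>0$. Choose $r$ so large that $m^{-1}(1-w^{-1})^r<\delta_0/2$, so that $\theta\ge \delta_0/2$ in Proposition~\ref{P:Dens1} with $t=w$. Let $\phi$ be the super-decreasing function of Theorem~\ref{T:ASA-tori}, let $d=\mathrm{rk}\,G$, and set
\[
n:=\phi\bigl(\delta_0/(2r),d\bigr)\cdot e,
\]
where $e$ is an auxiliary bounded integer, for instance absorbing the finite index of the image of $T(\mathcal{O}(S))$ that arises from passing between $T(K)$-density and $\Gamma$. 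Because $\phi$ is super-decreasing, any torus $T$ with $\bar{\mathfrak{d}}_K(S\cap\mathrm{Spl}(K_T/K))\ge\delta_0/(2r)$ has index $[T(\mathbb{A}_S):\overline{T(K)}]$ dividing $\phi(\delta_0/(2r),d)$.

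\textbf{Construction of the tori.} Since $\mathbb{S}$ is infinite and $G$ splits over $K_v$ for all $v\in\mathrm{Spl}(M/K)$ outside a finite set, I pick $cr$ places $v_i^{(j)}\in\mathbb{S}$, which are therefore outside $S$, at which $G$ is split. These places are chosen away from the level of $N$ and from the support of $y$; more precisely, I choose them after $N$ and $y$ are given, which is legitimate since $n$ does not depend on them.
\begin{itemize}
\item Apply Proposition~\ref{P:Gen} to obtain local tori $T^0_{ijk}$.
\item Using weak approximation on the variety of maximal tori, together with density of $\overline{N}\cap\Gamma$ in the relevant open subgroup of $G(\mathbb{A}_S)$, produce $K$-tori $T_k$ that are locally conjugate to the $T^0_{ijk}$ and contain a regular semisimple element $y_k$ close to $y$ in the congruence topology.
\item Then $K_{T_1},\dots,K_{T_r}$ are linearly disjoint over $M$ with $[K_{T_k}:M]=w$.
\item Proposition~\ref{P:Dens1} gives some $k$ with $\bar{\mathfrak{d}}_K(S\cap\mathrm{Spl}(K_{T_k}/K))\ge\delta_0/(2r)$, so $T:=T_k$ has ASA with index dividing $\phi(\delta_0/(2r),d)$.
\end{itemize}

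\textbf{Main obstacle: the group-theoretic step.} I must pass from this torus to the inclusion $Z(N,y)(\overline{N}\cap\Gamma)\supset\Gamma^n$, and this is the step I expect to be hardest. The plan is to adapt \cite[\S3]{PR-CSP}.

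First, arrange $y\in T(\mathcal{O}(S))N$, i.e.\ choose $T$ through a conjugate of $y$ modulo $N$. Then $T(\mathcal{O}(S))\subset Z(N,y)$ up to $N$, since $T$ is commutative.

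Second, ASA gives $\overline{T(\mathcal{O}(S))}^{\,n}$ open of controlled index in $T(\mathbb{A}_S)\cap W$. Combined with $\Gamma/(\overline{N}\cap\Gamma)\cong \overline{\Gamma}/\overline{N}$, this shows that the image of $T(\mathcal{O}(S))$ in this finite quotient contains the $n$-th powers of the image of the integral points of $T$ at the relevant places.

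Third, generate: conjugates of such tori by $Z(N,y)$ should cover $\Gamma^n$ modulo $\overline{N}$. I would first try to handle arbitrary $\gamma\in\Gamma$ by building the tori through $\gamma$ itself, taking $T$ to be the centralizer of a regular semisimple element congruent to $\gamma$ modulo $\overline{N}$ and commuting with $y$ modulo $N$. Making the "commuting with $y$" requirement compatible with the genericity conditions at the $v_i^{(j)}$ is the delicate point, and may require replacing $y$ by an element of $yN$ that is regular semisimple.
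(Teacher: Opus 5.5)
\textbf{The gap is your third step, which is the entire content of the proposition.}

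Several parts of your setup are sound and match the paper. Your uniform data agree with it: $r$ with $m^{-1}(1-w^{-1})^r<\delta_0/2$, $n=\phi(\delta_0/(2r),d)$, and $cr$ split places $v_i^{(j)}\in\mathbb{S}$ away from the level of $N$. Your observation that $T(\mathcal{O}(S))\subset Z(N,y)$, once the $K$-torus $T$ contains some $ya$ with $a\in N$, is correct. The element of $T$ must lie in $yN$, not merely be congruence-close to $y$. Since $\overline\Gamma$ is open, $\overline{T(\mathcal{O}(S))}=\overline{T(K)}\cap\overline{\Gamma}$. So ASA gives $u^n\in\overline{T(\mathcal{O}(S))}$ for every $u\in T(\mathbb{A}_S)\cap\overline\Gamma$, and the factor $e$ is superfluous. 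This is a slightly more direct substitute for the paper's lift to $\widehat{G}^S$.

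With tori chosen before $\gamma$, nothing connects an arbitrary $\gamma^n$ to them. The image of $T_k(\mathcal{O}(S))$ in $\overline\Gamma/\overline N$ is at most that of the abelian group $T_k(\mathbb{A}_S)\cap\overline\Gamma$. Covering $\Gamma^n$ by conjugates under $Z(N,y)$ is circular, since the size of $Z(N,y)$ is exactly what must be proved. Your fallback is a $K$-torus through an element of $\gamma\overline N$ whose $\mathcal{O}(S)$-points commute with $y$ modulo $N$. That would put $\gamma$ itself, not $\gamma^n$, into $Z(N,y)(\overline N\cap\Gamma)$ without using ASA at all. It restates a stronger form of the conclusion and comes with no construction.

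\textbf{The missing idea.} The torus is built from $y$, while $z:=\gamma$ enters only adelically, through local conjugacy by elements of $N_v$.

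Shrink $N$ so that $\overline N=\prod_{v\notin S}N_v$. Let $V$ be the finite set of $v\notin S$ with $N_v\ne G(\mathcal{O}_v)$, let $V'=\{v_i^{(j)}\}$, and let $\mathscr{V}=V\cup V'$. Fix $k$ and proceed as follows.

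First, $z\in\prod_{v\in V'}N_v$, and the sets $\mathcal{U}(v,T)$ are open and meet every open subgroup. So density of $N$ gives $n_k\in N$ with $z_k=zn_k\in\prod_{i,j}\mathcal{U}(v_i^{(j)},T^0_{ijk})$. Put $\mathscr{T}_k=Z_G(z_k)^\circ$.

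Second, use openness of $(g,b)\mapsto gbg^{-1}$ on $N_v\times\mathscr{T}_k^{\mathrm{reg}}(K_v)$, together with density of $yN$ in $\prod_{v\in\mathscr{V}}N_v$; this is where $y\in\overline N$ is used. This gives $a_k\in N$ with $t_k=ya_k=g_vb_vg_v^{-1}$, where $g_v\in N_v$ and $b_v\in\mathscr{T}_k^{\mathrm{reg}}(K_v)$, for all $v\in\mathscr{V}$.

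Third, $T_k=Z_G(t_k)^\circ$ contains $t_k\in yN$. It is $N_v$-conjugate to $\mathscr{T}_k$ at each $v\in\mathscr{V}$, hence $G(K_{v_i^{(j)}})$-conjugate to $T^0_{ijk}$. Propositions~\ref{P:Gen} and~\ref{P:Dens1} then yield some $k$ with $[T_k(\mathbb{A}_S):\overline{T_k(K)}]\mid n$.

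Finally, define the $S$-adele $u$ by $u_v=g_vz_kg_v^{-1}$ for $v\in\mathscr{V}$ and $u_v=1$ otherwise. It lies in $T_k(\mathbb{A}_S)\cap\overline\Gamma$. Moreover $u^n\in z_k^n\overline N=z^n\overline N$, because $g_v\in N_v\trianglelefteq G(\mathcal{O}_v)$ and $N_v=G(\mathcal{O}_v)$ off $\mathscr{V}$. Hence there is $s\in T_k(\mathcal{O}(S))\subset Z(N,y)$ with $s\in z^n\overline N$, as required.

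This also resolves the compatibility problem you flag: genericity is imposed on $z_k$, and $T_k$ inherits it through the local conjugacy. Passing from the adele $u$ to a global point via ASA is exactly why only $n$-th powers are reached.
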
 
Set $\omega = \mathfrak{d}_K(S \cap \mathrm{Spl}(M/K)) > 0$, and let $t = w$ be the order of the Weyl group of $G$. Choose $r$ so that 
$$
m^{-1} (1 - t^{-1})^r < \omega/2. 
$$
Then in the notations of Proposition \ref{P:Dens1}, we have 
$$
\theta := \mathfrak{d}_K(S \cap \mathrm{Spl}(M/K)) - m^{-1} (1 - t^{-1})^r > \omega / 2. 
$$
Let $n = \phi(\delta , d)$ in the notations of Theorem \ref{T:ASA-tori} where $\delta = \omega / (2r)$ and $d$ is the rank of $G$. We will show that this $n$ is as required.

\vskip1mm 

For $v \in V^K$ and a maximal $K_v$-torus $T$ of $G$, and let $T^{\small \mathrm{reg}}$ denote the Zariski-open subset of regular elements. It follows from the Implicit Function Theorem (cf. \cite[Proposition 3.4]{PRR}) that the map 
$$
\varphi_{v , T} \colon G(K_v) \times T^{\small \mathrm{reg}}(K_v) \longrightarrow G(K_v), \ \ \ (g , t) \mapsto gtg^{-1}, 
$$
is open; in particular, 
$$
\mathcal{U}(v , T) := \varphi_{v , T}(G(K_v) \times T^{\small \mathrm{reg}}(K_v))
$$
is an open subset of $G(K_v)$. It follows from the definition that $\mathcal{U}(v , T)$ is {\it conjugation-invariant}, and when $v$ is non-archimedean, it is also {\it solid}, i.e. it intersects every open subgroups of $G(K_v)$. 

Replace $N$ with a smaller finite index normal subgroup of $\Gamma$, we may assume that 
$$
\overline{N} = \prod_{v \notin S} N_v,$$ where $N_v \subset G(\mathcal{O}_v)$ is an open subgroup for  $v \notin S$. 
Furthermore, in view of \cite[Theorem 6.7]{PlRa}, we can find a finite subset $V \subset V^K \setminus S$ so that for $v \notin 
V^K \setminus (S \cup V)$ we have $N_v = G(\mathcal{O}_v)$ and $G$ is $K_v$-quasi-split. 
Let $r$ be as above, and let $c$ be the number of conjugacy classes of the Weyl group of $G$ as in \S\ref{S:GenT}. We pick $cr$ valuations $v_1^{(1)}, \ldots , v_c^{(1)}, \ldots , v_1^{(r)}, \ldots , v_c^{(r)} \in \mathbb{S} \setminus  V$. Then for each $v_{i}^{(j)}$, the group $G$ over  $K_{v_i^{(j)}}$ is quasi-split and at the same time an inner form of a split form (since $MK_{v_i^{(j)}} = K_{v_i^{(j)}}$). Thus, $G$ is  $K_{v_i^{(j)}}$-split for all $i = 1, \ldots , c$ and $j = 1, \ldots , r$. So, we are in a position to use Proposition \ref{P:0003} and find the maximal tori $T^0_{ijk}$ of $G$ $(i = 1, \ldots , c; \ j,k = 1, \ldots , r)$ with the properties described in this proposition. Set 
$$
V' = \{ v_1^{(1)}, \ldots , v_c^{(1)}, \ldots , v_1^{(r)}, \ldots , v_c^{(r)} \} \ \ \text{and} \ \  U_{ijk} = \mathcal{U}(v_i^{(j)} , T_{ijk}^0). 
$$
For each $k = 1, \ldots , r$, we define 
$$
W_k = \prod_{i,j} U_{ijk}
$$
which is an open subset of $\displaystyle G_{V'} = \prod_{v \in V'} G(K_v)$. Let us now show that 
\begin{equation}\label{E:0201}
\prod_{v \in V'} N_v \subset N \cdot W_k. 
\end{equation}
Indeed, since each $U_{ijk}$ is solid, we can find $s_{ijk} \in N_{v_i^{(j)}} \cap U_{ijk}$ and then  
set 
$$
\underline{s}_k = (s_{ijk})_{i,j} \in \prod_{v \in V'} N_v \ \ \text{for} \ \ k = 1, \ldots , r. 
$$ 
Then $W_k \underline{s}_k^{-1}$ is a neighborhood of the identity in $G_{V'}$, so the density of (diagonally embedded) $N$ in $\displaystyle \prod_{v \in V'} N_v$ implies that  
$$
\prod_{v \in V'} N_v \subset N \cdot (W_k \underline{s}_k^{-1}), 
$$    
and (\ref{E:0201}) follows. 
Now, fix $z\in \Gamma$ and show that 
\begin{equation}\label{E:0202}
z^n \in Z(N, y)(\overline{N} \cap \Gamma).  
\end{equation}
Since $V' \subset V^K \setminus (S \cup V)$, we have 
$$
z \in \prod_{v \in V'} G(\mathcal{O}_v) = \prod_{v \in V'} N_v, 
$$
so it follows from (\ref{E:0201}) that for each $k = 1, \ldots , r$ one can find $n_k \in N$ so that 
\begin{equation}\label{E:0205}
z_k := zn_k \in W_k. 
\end{equation}
Clearly, to prove (\ref{E:0202}), it is enough to show that 
\begin{equation}\label{E:0203}
z_k^n \in Z(N, y)(\overline{N} \cap \Gamma) \ \ \text{for at least one} \ \ k = 1, \ldots , r. 
\end{equation}
The argument used to prove (\ref{E:0201}) also proves the following.  
\begin{lemma}\label{L:0101}
Let $\mathscr{T}$ be a maximal $K$-torus of $G$, and let $\mathscr{V} \subset V^K \setminus S$ be a finite subset. For $v \in \mathscr{V}$, set $\Omega_v = 
\varphi_{v , \mathscr{T}}(N_v \times \mathscr{T}^{\small \mathrm{reg}}(K_v))$. Then for any $y \in \overline{N} \cap \Gamma$ there exists 
$a \in N$ such that 
$$
xa \in \prod_{v \in \mathscr{V}} \Omega_v,  
$$
so that we have  
$$
ya = g b g^{-1} 
$$
for some $g = (g_v)$ and $b = (b_v)$ with $g_v \in N_v$ and $b_v \in \mathscr{T}^{\small \mathrm{reg}}(K_v)$ for all $v \in \mathscr{V}$.  
\end{lemma}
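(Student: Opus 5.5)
The plan is to repeat the argument used for (\ref{E:0201}), with the set $\mathcal{U}(v,T)$ replaced by the smaller set $\Omega_v$ built from the fixed $K$-torus $\mathscr{T}$ and the open subgroup $N_v$. (In the statement, ``$xa$'' should be read as ``$ya$''.)

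\emph{Step 1: each $\Omega_v$ is open and solid in $N_v$.} Since $N_v \subset G(K_v)$ is open, the restriction of $\varphi_{v,\mathscr{T}}$ to the open subset $N_v \times \mathscr{T}^{\mathrm{reg}}(K_v)$ is still an open map, by the Implicit Function Theorem as cited from \cite{PRR}. Hence $\Omega_v$ is open in $G(K_v)$. The torus $\mathscr{T}(K_v)$ is Zariski-dense in $\mathscr{T}$, and $\mathscr{T}^{\mathrm{reg}}$ is a nonempty Zariski-open subset of $\mathscr{T}$. Therefore $\mathscr{T}^{\mathrm{reg}}(K_v)$ meets every open subgroup of $\mathscr{T}(K_v)$, so it contains regular elements arbitrarily close to $1$. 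As $N_v$ is an open subgroup, such elements $b_v$ lie in $N_v$, and then $b_v = \varphi_{v,\mathscr{T}}(1,b_v) \in \Omega_v \cap N_v$. Moreover $\Omega_v$ is invariant under conjugation by $N_v$.

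\emph{Step 2: density.} Fix $y \in \overline{N} \cap \Gamma$. Then $y_v \in N_v$ for every $v \in \mathscr{V}$, so the product $\prod_{v\in\mathscr{V}} y_v^{-1}\Omega_v$ is an open subset of $\prod_{v \in \mathscr{V}} G(K_v)$. By Step 1 it meets $\prod_{v\in\mathscr{V}} N_v$ in a nonempty open set. The diagonal image of $N$ is dense in $\prod_{v\in\mathscr{V}} N_v$; this is the same fact used in proving (\ref{E:0201}), and it follows from $\overline{N} = \prod_{v\notin S} N_v$. So there is $a \in N$ with $a_v \in y_v^{-1}\Omega_v$ for all $v\in\mathscr{V}$, that is, $ya \in \prod_{v\in\mathscr{V}}\Omega_v$.

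\emph{Step 3: conclusion.} By the definition of $\Omega_v$, for each $v \in \mathscr{V}$ we can write $(ya)_v = g_v b_v g_v^{-1}$ with $g_v \in N_v$ and $b_v \in \mathscr{T}^{\mathrm{reg}}(K_v)$. Taking $g = (g_v)$ and $b = (b_v)$ gives the stated decomposition.

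The only point that needs care is the solidity in Step 1, namely that the set $N_v\,\mathscr{T}^{\mathrm{reg}}(K_v)\,N_v^{-1}$ is open and meets $N_v$. Both facts follow from openness of $\varphi_{v,\mathscr{T}}$ together with regular elements of the torus accumulating at $1$. The rest is routine.
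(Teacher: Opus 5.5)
Your proof is correct and follows the paper, which simply notes that the argument for (\ref{E:0201}) carries over: each $\Omega_v$ is open and meets $N_v$, and $N$ is dense in $\prod_{v\in\mathscr{V}}N_v$. The one inference worth tightening is in Step~1: solidity rests on every open subgroup of $\mathscr{T}(K_v)$ being Zariski-dense in $\mathscr{T}$ (its Zariski closure has the full Lie algebra), not merely on $\mathscr{T}(K_v)$ itself being Zariski-dense.
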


\vskip2mm 

For each $k = 1, \ldots , r$, we let $\mathscr{T}_k = Z_G(z_k)^{\circ}$ denote the $K$-torus of $G$ containing $z_k$. Applying Lemma 
\ref{L:0101} to $\mathscr{T}_k$ and $\mathscr{V} = V \cup V'$, we find elements $a_k \in N$, $b_k \in \prod_{v \in \mathscr{V}} \mathscr{T}_k^{\small \mathrm{reg}}(K_v)$ and $g_k 
\in \prod_{v \in \mathscr{V}} N_v$ such that  
\begin{equation}\label{E:0103}
t_k := ya_k = g_k b_k g_k^{-1}. 
\end{equation}
It follows from (\ref{E:0205}) and (\ref{E:0103}) that the $K$-torus $T_k = Z_G(t_k)^{\circ}$ is $G(K_{v_i^{(j)}})$-conjugate to the torus $T_{ijk}^0$ for all $i = 1, \ldots , c$ and $j = 1, \ldots , r$. 
Let $P_k = K_{T_k}$ be the splitting field of $T_k$. It follows from Proposition \ref{P:Gen} that each $P_k$ is an extension of $M$ of degree $t = \vert W(G , T) \vert$ and together $P_1, \ldots , P_r$ are linearly disjoint over $M$. Taking into account our choice of $r$ and applying Proposition \ref{P:Dens1}, we obtain that there exists a $k \in \{1, \ldots , r\}$ such that 
$$
\bar{\frak{d}}_K(S \cap \mathrm{Spl}(P_k/K)) \geq \omega/(2r) = \delta.  
$$
Since the function $\phi$ in Theorem \ref{T:ASA-tori} is super-decreasing, we have that the index  
\begin{equation}\label{E:0206}
[T_k(\mathbb{A}_S) : \overline{T_k(K)}] \ \ \text{divides} \ \ n = \phi(\delta , d).   
\end{equation}
Obviously, $T_k(K)$ centralizes $t_k$ in $\widehat{G}$.  Since the centralizer $Z_{\widehat{G}}(t_k)$  is a closed subgroup of $\widehat{G}$ and the map $\pi$ is proper, hence closed, we conclude from (\ref{E:0206}) that 
\begin{equation}\label{E:0104}
\pi(Z_{\widehat{G}}(t_k)) \supset \overline{T_k(K)} \supset T_k(\mathbb{A}_S)^n. 
\end{equation} 
For $v \notin S$, we define 
$$
f_v = \left\{ \begin{array}{lll} (g_k)_v z_k^n (g_k)_v^{-1} & \text{if} & v \in \mathscr{V}, \\ 1 & \text{if} & v \notin \mathscr{V}. \end{array}   \right.
$$
It follows from (\ref{E:0103}) that the $S$-adele $f = (f_v)$ belongs to $T_k(\mathbb{A}_S)^n$. So, according to (\ref{E:0104}) that there exists $h \in Z_{\widehat{G}}(t)$ such that $\pi(h) = f$. In fact, $f \in \overline{\Gamma}$ (because $z \in \Gamma$ and $(g_k)_v \in N_v$ for all $v \in \mathscr{V}$), so $h \in \widehat{\Gamma}$, and therefore one can find $s \in \Gamma \cap h \widehat{N}$ is nonempty (as $\Gamma$ is dense in $\widehat{\Gamma}$ and $h\widehat{N}$ is open). Pick $b \in \Gamma \cap s\widehat{N}$. Then 
$$
[y , s] \in [t_k , s] \widehat{N} = [t_k , h]\widehat{N} = \widehat{N} 
$$ 
since $h \in Z_{\widehat{G}}(t_k)$. Thus, $[y , s] \in \Gamma \cap \widehat{N} = N$, and therefore $s \in Z(N , y)$. On the other hand, 
$$
s \in \pi(h) \overline{N} = f \overline{N} = z_k^n \overline{N}. 
$$
Indeed, for $v \in \mathscr{V}$ we have $f_vz_k^{-n} = [g_v , z_k^n] \in N_v$ since $g_v \in N_v$ for $v \in \mathscr{V}$, and for $v \in V^K \setminus (S \cup \mathscr{V})$ we have 
$f_v z_k^{-n} = z_k^{-n} \in G(\mathcal{O}_v) = N_v$, hence $fz_k^{-n} \in \overline{N}$, as required. Thus, $z_k^n \in Z(N, y)(\overline{N} \cap \Gamma)$, proving (\ref{E:0203}) and completing the proof of Proposition~\ref{P:Main}.  \hfill $\Box$  

\begin{remark}
While the overall structure of the proof of centrality in Cases 1 and 2 is similar, the argument in Case 1 relies on a different statement along the lines of Proposition~\ref{P:Main} (see \cite[\S3]{PR-CSP} and \cite[\S8]{RT}). For these reason, we decided to keep these cases separate. 
\end{remark}

\bibliographystyle{amsplain}

\end{document}